\spnewtheorem{theorema}{Theorem}{\bfseries}{\itshape}
\title
{
	On uniquely $k$-list colorable planar graphs, graphs on surfaces, and regular graphs
}
\author
{
	M. Abdolmaleki \and J. P. Hutchinson \and \\ S. Gh. Ilchi \and E. S. Mahmoodian \and \\ M. A. Shabani
}
\institute
{
	M. Abdolmaleki \and E. S. Mahmoodian \and M. A. Shabani
	\at
		Department of Mathematical Sciences, Sharif University of Technology, P.O. Box 11155-9415 Tehran, I. R. Iran\\
		\email{emahmood@sharif.edu}\\
		\email{\{mojtabaabdolmaleki2009,aminshabaany\}@gmail.com}
	\and
	J. P. Hutchinson
	\at
		Department of Mathematics, Statistics, and Computer Science, Macalester College, St. Paul, MN, USA\\
		\email{hutchinson@macalester.edu}
	\and
	S. Gh. Ilchi
	\at
		Department of Computer Engineering, Sharif University of Technology, P.O. Box 11155-9517, Tehran, I. R. Iran\\
		\email{silchi72@gmail.com}
}
\begin{document}
\maketitle

\begin{abstract}
A graph $G$ is called {\sf uniquely\ $k$-list\ colorable\ } (U$k$LC) if there exists a list of colors on its vertices, say $L=\lbrace S_v \mid v \in V(G) \rbrace $,
each of size $k$, such that there is a unique proper list coloring of $G$ from this list of colors. A graph $G$ is said to have {\sf property $M(k)$} if it is not uniquely $k$-list colorable. 
Mahmoodian and Mahdian~\cite{MR1675193} characterized all graphs with property $M(2)$. For $k\geq 3$ property $M(k)$ has been studied only for multipartite graphs. 
Here we find bounds on $M(k)$ for graphs embedded on surfaces, and obtain new results on planar graphs. We begin a general study of bounds on $M(k)$ for regular graphs, as well as for graphs with varying list sizes. 
\keywords{Uniquely list colorable graphs \and UkLC \and Planar graphs \and Regular graphs \and Graphs on surfaces}
\end{abstract}

\section {Introduction and preliminary results}
\label{intro-sec}
Let $G=(V,E)$ be a graph with a set of colors available on each vertex $v\in V(G)$, say $S_v$. A function $c: V(G) \rightarrow \bigcup_{v\in V(G)}^{} S_{v}$ is called a {\sf proper list coloring} of $G$ if it has the following conditions:
\begin{equation*}
\forall v\in V(G) \Rightarrow c(v) \in S_{v}
\end{equation*}
\begin{equation*}
\forall (u,v)\in E(G) \Rightarrow c(u) \neq c(v).
\end{equation*}

$G$ is called a \begin{sf}uniquely $L$-list colorable graph\end{sf} if there exists a collection of sets $L=\lbrace S_v \mid v \in V(G) \rbrace $ from which we have exactly one proper list  coloring of $G$; $G$ is also called a \begin{sf}uniquely $k$-list colorable graph (U$k$LC)\end{sf}, if each list has a size of $k$. In the opposite sense, a graph has {\sf property $M(k)$}, M for Marshall Hall, if and only if it is not uniquely $k$-list colorable. For example, every graph is U1LC and so does not have property $M(1)$. UkLC was independently introduced by Mahmoodian and Mahdian~\cite{MR1625324} and by Dinitz and Martin~\cite{dinitz1995stipulation}.

Mahdian and Mahmoodian~\cite{MR1675193} proved the following theorem.

\begin{theorema}{\rm (\cite{MR1675193})}
\label{m2}
A connected graph has property $M(2)$ if and only if every block of the graph is either a cycle, a complete graph, or a complete bipartite graph.
\end{theorema}

Also complete multipartite U3LC graphs are completely 
characterized in~\cite{MR1832463,MR2429150,MR2239316,MR2364772}.
All but finitely many complete multipartite graphs with at least six parts that are U4LC are characterized in  
\cite{MR2568835}.
In~\cite{MR2431825} it is shown that recognizing uniquely $k$-list colorable graphs is $\sum_{2}^{p}$-complete for every $k \geq 3$.
As $k$ increases, it gets more difficult to characterize U$k$LC complete multipartite graphs 
\cite{MR3115283}.\\

In~\cite{MR1906860} $\chi_u(G)$ is defined as follows:

\begin{definition}
A U$k$LC graph $G$ is {\sf $(k,t)$-list colorable} if it has a list color assignment on vertices of $G$ such that the number of different colors in the union of all lists is at most $t$.
For a graph $G$ and a positive integer $k$, we define 
\begin{sf}
$\chi_u(G,k)$
\end{sf}
to be the minimum number $t$ such that $G$ is uniquely $(k,t)$-list colorable and 0 if $G$ is not uniquely $k$-list colorable.
\begin{sf} The uniquely list chromatic number of a graph $G$,
\end {sf} denoted by \begin{sf}
$\chi_u(G),$
\end{sf} is defined to be $\max_{k\geq1}$ $\chi_u(G,k)$.
\end{definition}


The following theorem on $\chi_u(G,2)$ is also given in~\cite{MR1906860}.

\begin{theorema}{\rm (\cite{MR1906860})}
A graph $G$ is uniquely 2-list colorable if and only if it is uniquely $(2, t)$-list colorable, where $t = \max(3,\chi(G))$.
\label{chi_theorem}
\end{theorema}

Later we show that for $k\leq3$ every $k$-regular connected graph $G$ has $\chi_u(G) \leq \Delta + 1$ = $k+1$. This proves a conjecture in~\cite{MR1906860} in some cases.

Let $m(G)$ be the least $k$ for which a graph has property $M(k)$. Equivalently $k=m(G)-1$ is the largest integer $k$ such that $G$ is U$k$LC. It can be proved easily that for every graph $G$, $2\leq m(G)\leq \min(\delta+2,n-1)$ where $\delta$ is the minimum degree of $G$ with $n$ vertices.


\begin{theorema}{\rm (\cite{MR1906860})}
\label{induced}
Suppose $H$ is an induced subgraph of a graph $G$ with the following property:
\begin{itemize}
\item $H$ has property $M(k)$, and
\item each vertex of $H$ is adjacent to at most $l$ vertices of $V(G)\setminus V(H)$.
\end{itemize}
Then $G$ has property $M(k+l)$.
\end{theorema}
In the later sections, we  obtain a lower bound on the average degree of all U$k$LC graphs. Based on this lower bound, we will state some results about planar graphs, graphs on surfaces and regular graphs. Additionally in the last section, we  study the graphs where the lists of colors may have different sizes.

\section {Main results}
\label{main-sec}
Consider a graph $G = (V, E)$ with a given $k$-list assignment $L$, and let $c$ be an $L$-list coloring of $G$. We call the following procedure on $G$ a
\begin{sf}
(G,c)-directing procedure.
\end{sf}
\begin{enumerate}
\item{
For each edge $(u,v)\in E(G)$, we give a direction from $v$ to $u$ if the following two conditions are satisfied: 
\begin{itemize}
\item
$u$ is the only neighbor of $v$ with color $c(u)$, and
\item
$c(u) \in L(v)$.
\end{itemize}
}
 
 \item Denote the (partially directed) graph that is obtained so far in the directing procedure as $G'$. For each pair \{$v$,$t$\} with $v\in V(G')$ and $t\in L(v)\setminus \left\{c(v) \right\}$, consider $A = \left\{ a_1, a_2, \ldots, a_r \right\}$,  the set of all the neighbors of $v$ with $c\left( a_i \right) = t$. For each such pair    
 \{$v$,$t$\} for which exactly one of the edges between $v$ and the vertices of $A$ is undirected, say $e=(v, a_i)$, and all other edges are directed to $v$, then simultaneously direct each of these edges $e$ from $v$ to $a_i$.  
 
If, after giving all such directions, a vertex and a color appear having such property,  
then we go to Step 2 again. Otherwise terminate the procedure.
\end{enumerate}
We denote the graph $D(G,c,0) = G$. Denote the graph that is obtained at the end of Step $1$ by $D(G,c,1)$, and for each $i\geq 2$, denote $D(G,c,i)$ the graph obtained after $(i-1)$-th iteration of Step $2$. Also, we denote the final (partially or totally directed) graph by $D(G,c)$.
\begin{definition}
A $(t_1,t_2)$-color alternating path in a graph is a path that is properly list-colored with colors $t_1$ and $t_2$. A directed $(t_1,t_2)$-color alternating path in a partially (or totally) directed graph is a $(t_1,t_2)$-color alternating path with all edges directed coherently (all in one direction).
\end{definition}

\begin{lemma}
\label{amin}
If $G$ is U$k$LC, then there is no bidirectional edge in $D(G,c)$.
\end{lemma}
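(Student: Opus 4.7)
The plan is to argue by contradiction: assume $G$ is U$k$LC with unique proper $L$-list coloring $c$, and suppose $D(G,c)$ contains a bidirectional edge $(u,v)$. Write $a = c(u)$ and $b = c(v)$. I would aim to construct a second proper $L$-list coloring $c'$, and split the analysis according to the stage at which the two opposing directions on $(u,v)$ were introduced. The only two cases are that both directions arise in Step~1 or that both arise simultaneously in one iteration of Step~2; a mixed case is impossible because Step~2 only directs edges that are currently undirected, so once an edge acquires a direction it cannot subsequently acquire the opposite one.

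In the Step~1 case, the triggering conditions force $b \in L(u)$, $a \in L(v)$, and moreover $v$ (respectively $u$) is the \emph{unique} $b$-colored (respectively $a$-colored) neighbor of $u$ (respectively $v$). Setting $c'(u) = b$, $c'(v) = a$, and $c'(x) = c(x)$ for every other $x$, I would verify properness by inspecting only the adjacencies incident to $u$ or $v$; clearly $c' \neq c$, contradicting U$k$LC.

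In the Step~2 case, I would let $H_0$ be the connected component of the $(a,b)$-bichromatic subgraph of $(G,c)$ that contains the edge $(u,v)$, and define $c'$ by interchanging the colors $a$ and $b$ throughout $H_0$ while leaving $c$ unchanged on $V(G) \setminus V(H_0)$. Properness of $c'$ is automatic: any neighbor of a vertex of $H_0$ that lies outside $H_0$ has color different from both $a$ and $b$, so no boundary conflict is created, and edges inside $H_0$ merely have their endpoints' colors swapped. The main obstacle is showing that $c'$ remains a list coloring, i.e.\ that the swapped color of every $s \in H_0$ actually lies in $L(s)$. I would establish this by induction on the iteration at which the relevant directions around $s$ were introduced. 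The Step~2 condition that triggered $u \to v$ via the pair $\{u,b\}$ guarantees that every $b$-colored neighbor of $u$ other than $v$ already carries an incoming directed edge into $u$, and symmetrically for $\{v,a\}$; each such incoming direction, whether it was added in Step~1 or in an earlier iteration of Step~2, provides the needed list membership at its source. Iterating this observation propagates list membership along the alternating $(a,b)$-paths from $\{u,v\}$ through $H_0$, with the recursion terminating at Step~1 directions whose uniqueness conditions supply the base case. Once this is verified, $c'$ is a second proper $L$-list coloring of $G$ distinct from $c$ (they differ at least at $u$ and $v$), contradicting U$k$LC and completing the proof.
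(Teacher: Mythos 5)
Your proposal is correct and follows essentially the same route as the paper: the Step-1 case is handled by swapping the colors of the two endpoints, and the Step-2 case by a Kempe-style swap along $(a,b)$-alternating paths, with the Step-1/Step-2 triggering conditions supplying the required list memberships by induction on the iteration at which each edge was directed. The only cosmetic difference is that you swap the full bichromatic component $H_0$ (making properness automatic but requiring the propagation argument for list membership), whereas the paper swaps the set of vertices having directed alternating paths to $u$ or $v$ — and its closure argument shows that set coincides with $V(H_0)$ anyway.
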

\begin{proof}
Note that if in Step 1 one edge is bidirectional, then colors on the endpoints can be switched and G is not UkLC. Otherwise, suppose that $i>1$ is the least number such that $D(G,c,i)$ has a bidirectional edge $e=(u,v)$ and let $G' = D(G,c,i-1)$. Consider $S$, the set of all vertices in $G'$ that have a directed $(c(u),c(v))$-color alternating path to either $u$ or $v$. Now we can switch colors of vertices in $S$ between $c(u)$ and $c(v)$ and get a new coloring $c'$, as explained below.

For each vertex $x\in S$, if the edge $e'$ of $x$ in the color alternating path from $x$ to $u$ or $v$ is directed in Step 1 of the directing procedure, then $x$ has no other adjacent vertices with colors $c(u)$ or $c(v)$. Otherwise, if the edge $e'$ of $x$ is directed in  Step 2 in the directing procedure, then all neighbors of $x$ with color $c(u)$ or $c(v)$ must have a directed edge to $x$ except $e'$ and so they also must be in $S$. So $c'$ is another proper coloring of $G$ and we can conclude that $G$ has property $M(k)$.
\qed
\end{proof}

\begin{lemma}
\label{lmleast}
If $G$ is a U$k$LC graph with a unique proper $k$-list coloring $c$, then for each vertex $v \in D(G,c)$ and color $t\in L(v)\backslash \{c(v)\}$, there is at least one undirected edge or an out-directed edge from $v$ to one of the neighbors of $v$ with color $t$.
\end{lemma}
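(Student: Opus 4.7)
The plan is to argue by contradiction. Suppose there exist $v$ and $t\in L(v)\setminus\{c(v)\}$ for which every edge from $v$ to a $t$-colored neighbor of $v$ is directed into $v$ in $D(G,c)$. First observe that $v$ must have at least one $t$-colored neighbor, for otherwise simply redefining $c(v)$ to $t$ and leaving all other vertices alone would give a second proper $L$-list coloring of $G$, contradicting U$k$LC. Let $A=\{a_1,\ldots,a_r\}$ be these $t$-colored neighbors; by hypothesis each edge $a_iv$ is directed $a_i\to v$.

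Following the template of the proof of Lemma~\ref{amin}, I would define $S$ to be the set of vertices that admit a coherently directed $(c(v),t)$-color alternating path ending at $v$, with $v\in S$ via the trivial empty path. By hypothesis each $a_i$ already lies in $S$ via the length-one path $a_i\to v$. The aim is then to show that the coloring $c'$ obtained from $c$ by swapping $c(v)\leftrightarrow t$ throughout $S$ is a second proper $L$-list coloring. Since $c'(v)=t\neq c(v)$ this gives $c'\neq c$, contradicting the uniqueness of $c$ and hence proving the lemma.

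The list condition for $c'$ is checked as follows. For each $w\in S\setminus\{v\}$, pick a directed alternating path from $w$ to $v$ and let $w\to w_1$ be its first edge; by the alternating structure $c(w_1)\in\{c(v),t\}$ is the color opposite to $c(w)$. If this edge was directed in Step~1, then by the definition of Step~1 we have $c(w_1)\in L(w)$; if instead it was directed at some iteration of Step~2 with respect to the pair $(w,c(w_1))$, the very precondition of that step requires $c(w_1)\in L(w)\setminus\{c(w)\}$. Either way the swapped color $c'(w)=c(w_1)$ lies in $L(w)$, while $c'(v)=t\in L(v)$ by assumption.

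The main obstacle, and the subtle point borrowed from Lemma~\ref{amin}, is verifying propriety across the boundary of $S$; propriety inside $S$ is automatic since $S$ contains only vertices of colors $c(v)$ and $t$ and $c$ was proper. For a candidate boundary conflict, take $u\in S$ with $c(u)=c(v)$ and a $t$-colored neighbor $w\notin S$ (the other color case is symmetric, and the $u=v$ case is ruled out by the assumption $A\subseteq S$). Look again at the first edge $u\to u_1$ of $u$'s alternating path. If it was directed in Step~1 then $u_1$ is the unique $t$-colored neighbor of $u$, forcing $w=u_1\in S$, a contradiction; if it was directed in some iteration of Step~2 with respect to the pair $(u,t)$, then every other $t$-colored neighbor of $u$, in particular $w$, has a directed edge into $u$, and prepending $w\to u$ to the path of $u$ places $w$ in $S$, again a contradiction. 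Hence no boundary conflict exists, $c'$ is a proper $L$-list coloring distinct from $c$, and the lemma follows.
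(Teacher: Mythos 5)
Your proof is correct and follows essentially the same route as the paper's: define $S$ as the set of vertices with a directed $(c(v),t)$-color alternating path to $v$, swap the two colors on $S$, and verify that the result is a second proper list coloring. You simply spell out more carefully than the paper does why the list condition and the boundary propriety hold (via the Step~1/Step~2 case analysis on the first edge of each path), which is a welcome elaboration but not a different argument.
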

\begin{proof}
Suppose there is a $v \in G$ and color $t\in L(v)\backslash c(v)$ such that all neighbors of $v$ with color $t$ have a directed edge to $v$ in $D(G,c)$. Consider $S \subset V(D(G,c))$ as the set of all vertices with color $t$ or $c(v)$ that have a directed $(c(v),t)$-color alternating path to $v$. Now because each vertex in $S\backslash {v}$ has an out-directed edge, the list of each vertex contains $t$ and $c(v)$ and there is no other vertex with color $t$ or $c(v)$ in $G\setminus\{S\}$ that is adjacent to a vertex in $S$. We can switch colors of vertices in $S$ between $t$ and $c(v)$ and get a new proper coloring.
\qed
\end{proof}

\begin{definition}
Define $d^+(v,t)$ as the number of directed edges from vertex $v$ to a vertex with color $t$ in $D(G,c)$ and $d^-(v,t)$ as the number of directed edges from a vertex with color $t$ to $v$. Also, $d^+(v)=\sum_{t\in L(v)} d^+(v,t)$ and similarly $d^-(v)=\sum_{t\in L(v)} d^-(v,t)$.
\end{definition}
The next result appeared in  {\rm~\cite{MR1899929}}; here we give an
equivalent formulation with alternative short proof  which also may be used to prove Theorem~\ref{main_result}.

\begin{theorem}
\label{main_result}
All graphs with average degree less than $2k-2$ have property $M(k)$.
\end{theorem}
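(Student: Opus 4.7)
The plan is to prove the contrapositive: every U$k$LC graph on $n$ vertices has $|E| \geq (k-1)n$, i.e., average degree at least $2k-2$. So I would fix a U$k$LC graph $G$ with unique list coloring $c$ and analyze the auxiliary graph $D(G,c)$, aiming for a weighted counting inequality that converts cleanly into $|E| \geq (k-1)n$.

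First I would note that for every vertex $v$ and every $t \in L(v) \setminus \{c(v)\}$, $v$ must have at least one neighbor colored $t$; otherwise we could recolor $v$ with $t$, contradicting uniqueness. Lemma \ref{lmleast} then guarantees that at least one edge from $v$ to a $t$-colored neighbor is either out-directed or undirected in $D(G,c)$.

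The key step will be a strengthening: for each such pair $(v,t)$, either (i) there is an out-directed edge from $v$ to a $t$-colored neighbor, or (ii) there are at least two undirected edges from $v$ to $t$-colored neighbors. This is where I would invoke the termination of Step 2 of the directing procedure: if exactly one edge between $v$ and its $t$-colored neighbors were undirected while all the others were in-directed to $v$, then Step 2 would have fired and oriented that edge outward from $v$; so a single undirected edge (and no out-edge) is impossible at termination. Letting $d^+(v,t)$ count out-directed edges and $u(v,t)$ count undirected edges from $v$ to $t$-colored neighbors, this dichotomy yields
\begin{equation*}
2 d^+(v,t) + u(v,t) \geq 2.
\end{equation*}

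To finish, I would sum over the $k-1$ colors $t \in L(v)\setminus\{c(v)\}$ to get $2 d^+(v) + u(v) \geq 2(k-1)$ for every $v$, then sum over $v$. Writing $D$ and $U$ for the numbers of directed and undirected edges in $D(G,c)$ and using $\sum_v d^+(v) = D$ together with $\sum_v u(v) = 2U$, this gives $2D + 2U \geq 2(k-1)n$, i.e., $|E(G)| = D + U \geq (k-1)n$, hence average degree $\geq 2k-2$. The main obstacle is the refinement of Lemma \ref{lmleast} via the termination of Step 2; without the factor of $2$ on $d^+(v,t)$, the naive count only yields $2U + D \geq (k-1)n$, which is too weak when $U$ is large relative to $D$. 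Lemma \ref{amin}, ruling out bidirectional edges, is what lets us treat $D$ as counting each directed edge exactly once in the final tally.
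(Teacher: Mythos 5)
Your proof is correct and follows essentially the same route as the paper's: your key dichotomy, yielding $2d^+(v,t)+u(v,t)\geq 2$ from Lemma~\ref{lmleast} plus the termination condition of Step 2, is algebraically equivalent to the paper's bound $d^-(v,t)-d^+(v,t)\leq N(v,t)-2$, since every edge from $v$ to a $t$-colored neighbor is in-directed, out-directed, or undirected once Lemma~\ref{amin} rules out bidirectional edges. The only difference is bookkeeping in the final summation: you total $2d^+(v)+u(v)$ against $2(D+U)=2|E(G)|$, while the paper totals $d^-(v)-d^+(v)$ and invokes the balance of in-degrees and out-degrees.
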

\begin{proof}
Suppose that there is a U$k$LC graph $G$ with average degree less than $2k-2$; let $d(v)$ denote the degree of vertex $v$ in $V(G)$. Then there exists a proper coloring $c$ on $G$, and let  $G^* = D(G, c)$.
For each vertex $v\in V(G^*)$, set $L'(v)=L(v)\setminus\{c(v)\}$. We introduce an upper bound on $d^-(v, t) - d^+(v, t)$.  Let $N(v,t)$ be the number of vertices adjacent to $v$ with color $t$. Since $G$ is U$k$LC, it is clear that $N(v,t) \geq 1$.
Now there are two situations: 

\begin{itemize}
\item $N(v,t) = 1$: Obviously this edge is directed out of the $v$ in Step 1 of the directing procedure.
\item $N(v,t) > 1$: 
If $d^-(v, t) \leq N(v,t)-2$, then $d^-(v, t) - d^+(v, t) \leq N(v,t)-2$.
Otherwise according to Lemma 2 $d^-(v, t) = N(v,t)-1$ and by Step 2 of the directing procedure $d^+(v, t) = 1$.
\end{itemize}

Thus in all cases $d^-(v, t) - d^+(v, t)$ is at most $N(v,t) - 2$,   
\begin{equation*}
d^-(v) - d^+(v) \leq \left [ \sum_{t\in L'(v)} (d^-(v, t) - d^+(v, t)) \right ] + d(v) - \sum_{t\in L'(v)} N(v,t) \leq d(v) - 2(k-1). 
\end{equation*}

Now if we sum up these inequalities over all vertices of $G^*$, then we can conclude that:
\begin{equation*} 
\sum_{v\in V(G)} d^-(v) - d^+(v) \leq 2|E(G)| - |V(G)|\times (2k-2) < 0.
\end{equation*}

The above inequality contradicts the fact that the sum of the in-degrees of all vertices is equal to the sum of the out-degrees of all vertices, and the proof is complete. 
\qed
\end{proof}

Also there exist infinitely many UkLC graphs such that their average degrees tend to $2k-2$. So the theorem above gives a tight inequality for the average degree of UkLC graphs.

\begin{lemma}
\label{tight}
The inequality in Theorem~\ref{main_result} is tight.
\end{lemma}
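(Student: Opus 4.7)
The plan is to construct, for each $k \ge 2$, an infinite family $\{G_n\}_{n \ge 1}$ of U$k$LC graphs whose average degrees converge to $2k-2$ from above. The idea is to start with a fixed U$k$LC ``core'' $H$ and then adjoin many new vertices of degree exactly $k-1$ (the minimum degree any vertex in a U$k$LC graph can have) in a way that forces each new vertex's color from the unique coloring of $H$.

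Fix a U$k$LC graph $H$ with $k$-list assignment $L$ and unique proper coloring $c$, chosen so that $c$ uses at least $k-1$ distinct colors. For $k=2$ this is automatic (take any U$2$LC graph, e.g., $W_4$), and for larger $k$ one may take for $H$ a complete multipartite U$k$LC graph with at least $k-1$ parts from the families cited in the introduction, which guarantees $\chi(H) \ge k-1$. Choose vertices $u_1, \ldots, u_{k-1} \in V(H)$ with $c(u_1), \ldots, c(u_{k-1})$ pairwise distinct. For each $n \ge 1$, build $G_n$ from $H$ by adjoining $n$ new vertices $w_1, \ldots, w_n$, each made adjacent to all of $u_1, \ldots, u_{k-1}$, and assign $w_j$ the list $L(w_j) = \{c(u_1), \ldots, c(u_{k-1}), x_j\}$, where the $x_j$ are mutually distinct colors not appearing in $L$.

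Any proper $L$-coloring of $G_n$ restricts to a proper $L$-coloring of $H$, hence agrees with $c$ by uniqueness. Then each $w_j$ sees all of $c(u_1), \ldots, c(u_{k-1})$ on its neighbors, which forces $c(w_j) = x_j$. Therefore $G_n$ is U$k$LC. Since $|V(G_n)| = |V(H)| + n$ and $|E(G_n)| = |E(H)| + n(k-1)$, the average degree of $G_n$ is
\[
\frac{2|E(H)| + 2n(k-1)}{|V(H)| + n} \longrightarrow 2k - 2 \quad \text{as } n \to \infty,
\]
which yields the required family. The only non-routine step is arranging that the core $H$ has a unique coloring using at least $k-1$ colors, but this is easily handled by the cited complete multipartite constructions.
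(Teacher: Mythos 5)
Your padding argument is internally sound and takes a genuinely different route from the paper's: the paper builds, for every $k$, an explicit family from scratch (gluing $k+1$ copies of a circulant-type graph $T(k,n)$ on $kn$ vertices with lists $\{1,\ldots,k+1\}\setminus\{i\}$, plus a few forcing edges between copies), whereas you reduce tightness to the existence of a single U$k$LC seed $H$ and then dilute its average degree by attaching degree-$(k-1)$ vertices whose colors are forced. The verification of the dilution step is correct: any proper list coloring of $G_n$ restricts to a proper $L$-coloring of $H$ and hence equals $c$, each $w_j$ is then forced to the fresh color $x_j$, and since $H$ itself has average degree at least $2k-2$ (by Theorem~\ref{main_result}), the ratio $\bigl(2|E(H)|+2n(k-1)\bigr)/\bigl(|V(H)|+n\bigr)$ tends to $2k-2$ from above.

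The gap is in the step you dismiss as routine: the existence, for \emph{every} $k\ge 2$, of a U$k$LC graph $H$ whose unique coloring uses at least $k-1$ distinct colors. The sources cited in the introduction characterize U$3$LC complete multipartite graphs and (all but finitely many) U$4$LC complete multipartite graphs with at least six parts; they supply a seed for $k\le 4$ but say nothing for $k\ge 5$. Since Theorem~\ref{main_result} is asserted for all $k$, tightness must be exhibited for all $k$, and producing even one U$k$LC graph for arbitrary $k$ is precisely the nontrivial content of the lemma --- it is what the paper's explicit $T(k,n)$ construction supplies. As written, your proof establishes the lemma only for $k\le 4$; to complete it you would need either to construct a seed for general $k$ yourself (at which point you are doing essentially the paper's work) or to cite a specific general existence result, which the paper's bibliography does not obviously provide.
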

\begin{proof}
For $k\geq2, n\geq2$, let $T(k,n)$ be the graph with $kn$ vertices where vertices $j$ and $j'$ $(1\leq j < j' \leq kn)$ are adjacent if and only if $j'-j < k$ or $j+kn-j' < k$. 
For $1\leq i \leq k+1$ set $G_i = T(k,n)$ and assign to each vertex of $G_i$ the set of colors $\{1,2,\cdots,k+1\} \setminus \{i\}$. It's obvious that $k$ consecutively-labeled vertices in $G_i$ get different colors in a proper list-coloring. Then for each $i$ and $j$ with  $1\leq i\leq k+1$ and $1\leq j \leq k-1$,  add an edge between vertex $j$ of $G_i$ and vertices $1,2,\cdots,k$ of the graph $G_{(i+j) \pmod{(k+1)}}$. In a list coloring of the resulting graph, vertex $j$ of graph $G_i$ must be colored with $(i+j) \pmod{(k+1)}$. Since there is a unique coloring of vertices $1,2,\cdots,k-1$ of each $G_i$, there is a unique coloring of the rest of the graph.  Thus the proposed graph is U$k$LC, and the average degree of this graph equals to:
\begin{equation*}
\frac{(k+1)[nk(2k-2) + (k-1)k]}{(k+1)kn} = 2k-2 + \frac{k-1}{n}.
\end{equation*}
The latter expression converges to $2k-2$  when $n \rightarrow \infty$. 
\qed
\end{proof}

By Theorem~\ref{main_result} we can conclude the following statements about planar graphs.

The following two corollaries were stated in {\rm~\cite{MR1899929}}.
\begin{corollary}
\label{cor1}
Each planar graph has property $M(4)$.
\end{corollary}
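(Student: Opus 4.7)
The plan is to deduce this as an immediate consequence of Theorem~\ref{main_result} with $k=4$, together with the standard edge-count bound for simple planar graphs coming from Euler's formula.

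First I would observe that for $k=4$ the threshold in Theorem~\ref{main_result} is $2k-2=6$, so it suffices to show that every planar graph has average degree strictly less than $6$. For a simple planar graph $G$ on $n\ge 3$ vertices, Euler's formula gives the well-known bound $|E(G)| \le 3n-6$. Hence the average degree of $G$ satisfies
\begin{equation*}
\frac{2|E(G)|}{n} \;\le\; 6 - \frac{12}{n} \;<\; 6 \;=\; 2k-2.
\end{equation*}
Applying Theorem~\ref{main_result} then yields property $M(4)$ for $G$.

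Second, I would handle the degenerate small cases $n\le 2$ separately: such graphs have at most one edge, hence maximum degree at most $1 < 4$, and the easy bound $m(G)\le \delta+2$ recalled in the introduction (or a direct check) shows they trivially have property $M(4)$. Since planarity is vacuous here, nothing more is needed.

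I do not expect any real obstacle: the content is entirely in Theorem~\ref{main_result}, and the corollary is just an application of the $3n-6$ edge bound. The only point to be mindful of is that Theorem~\ref{main_result} requires the inequality to be strict, which is exactly why the $-12/n$ slack from Euler's formula is the right thing to invoke rather than the nonstrict inequality $|E|\le 3n-6$ alone.
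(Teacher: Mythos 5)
Your proposal is correct and follows essentially the same route as the paper: apply Theorem~\ref{main_result} with $k=4$ after bounding the average degree via $|E(G)|\le 3|V(G)|-6$. Your extra care with the $n\le 2$ cases is a minor tidiness the paper omits but does not change the argument.
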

\begin{proof}
By Euler's formula, for any planar graph $G(V, E)$, $\left | E(G)\right | \leq 3\left | V(G)\right | - 6$. So the average degree of any planar graph is less than $6$. Hence all  planar graphs have property $M(4)$.
\qed
\end{proof}
\begin{corollary}
Every triangle-free planar graph has property $M(3)$.
\end{corollary}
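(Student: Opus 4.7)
The plan is to reduce the statement directly to Theorem~\ref{main_result} applied with $k=3$. That theorem says that any graph with average degree strictly less than $2k-2$ has property $M(k)$, so for triangle-free planar graphs it suffices to verify that the average degree is strictly less than $4$.

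First I would invoke the standard Euler-formula bound for triangle-free planar graphs: since every face of a simple plane embedding of a triangle-free graph (with at least one cycle) has length at least $4$, the face-edge incidence inequality $2|E(G)| \geq 4 f$ combined with Euler's formula $|V(G)| - |E(G)| + f = 2$ yields
\begin{equation*}
|E(G)| \leq 2|V(G)| - 4.
\end{equation*}
Consequently the average degree satisfies $2|E(G)|/|V(G)| \leq 4 - 8/|V(G)| < 4 = 2k-2$ for $k=3$, so Theorem~\ref{main_result} applies.

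The only routine loose ends are the degenerate small cases where the Euler-formula bound does not literally apply (forests, and graphs on one or two vertices). For forests the inequality $|E(G)| \leq |V(G)| - 1 \leq 2|V(G)| - 4$ holds whenever $|V(G)| \geq 3$, so the average-degree bound still gives what we need. For graphs with one or two vertices, property $M(3)$ is immediate since there are obviously multiple proper list colorings from any $3$-list assignment. I do not anticipate any genuine obstacle: the entire argument is a one-line Euler-formula computation feeding into the already-proved Theorem~\ref{main_result}.
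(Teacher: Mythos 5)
Your proof is correct and follows the same route as the paper: derive $|E(G)|\leq 2|V(G)|-4$ from Euler's formula for triangle-free planar graphs, conclude the average degree is strictly below $4$, and apply Theorem~\ref{main_result} with $k=3$. The paper states this in one line without the girth computation or the degenerate cases (forests, graphs on one or two vertices), which you handle explicitly; this is just added care, not a different argument.
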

\begin{proof}
Again according to Euler's formula, the average degree of any triangle-free planar graph is less than $4$, so all of them have property $M(3)$.
\qed
\end{proof}
\begin{corollary}
Every outer-planar graph has property $M(3)$.
\end{corollary}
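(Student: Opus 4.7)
The plan is to reduce to Theorem~\ref{main_result} with $k=3$: that theorem asserts that any graph whose average degree is strictly less than $2k-2$ has property $M(k)$, so it suffices to verify that every outerplanar graph has average degree strictly less than $4$.

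The only ingredient I need is the classical edge bound $|E(G)| \leq 2n-3$ for an outerplanar graph on $n \geq 2$ vertices. I would obtain this by fixing a plane embedding of $G$ in which every vertex lies on the outer face, then applying Euler's formula together with the observation that the outer face has length at least $n$ while each bounded face has length at least $3$. Equivalently, one may cite that a maximal outerplanar graph is a triangulation of an $n$-gon with exactly $2n-3$ edges, which makes the bound immediate.

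Combining these, the average degree of $G$ satisfies $2|E(G)|/n \leq (4n-6)/n = 4 - 6/n < 4$; the corner cases $n \leq 2$ involve graphs with at most one edge and trivially satisfy the hypothesis as well. Theorem~\ref{main_result} then yields $M(3)$. There is no substantive obstacle: the argument is essentially a one-line application of the average-degree criterion once the outerplanar edge bound is in hand, and the only point requiring attention is that the inequality is genuinely strict, which is guaranteed by the $-6/n$ slack coming from the $2n-3$ bound (in contrast with the $3n-6$ planar bound used in Corollary~\ref{cor1}).
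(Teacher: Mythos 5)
Your proposal is correct and follows essentially the same route as the paper: bound the number of edges of an outerplanar graph via Euler's formula (the paper derives the same $|E(G)|\leq 2|V(G)|-3$ estimate by counting face lengths, with the outer face contributing at least $|V(G)|$), conclude that the average degree is strictly less than $4$, and apply Theorem~\ref{main_result} with $k=3$.
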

\begin{proof}
Let $G$ be a connected outer-planar graph. If $G$ has $|F(G)|$ faces, 
then $2|E(G)| \geq 3(F(G)-1)+|V(G)|$. Substituting the resulting upper bound for $|F(G)|$ into Euler's formula leads to the upper bound on  average degree: $2\frac{|E(G)|}{|V(G)|}<4$, so $G$ has property $M(3)$.
\qed
\end{proof}
\begin{remark}
There are infinitely many U3LC planar graphs.
\end{remark}

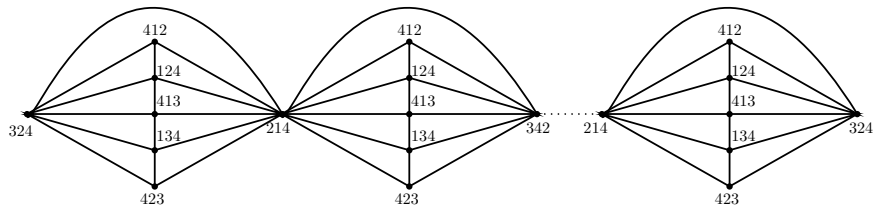
\begin{figure}[H]
\psscalebox{0.6 0.6} 
{ 
\begin{pspicture}(0,-2.2775712)(19.08,2.2775712)
\psdots[linecolor=black, dotsize=0.14](8.78,1.3783664)
\psdots[linecolor=black, dotsize=0.14](8.78,0.5783663)
\psdots[linecolor=black, dotsize=0.14](8.78,-0.22163369)
\psdots[linecolor=black, dotsize=0.14](8.78,-1.0216337)
\psdots[linecolor=black, dotsize=0.14](8.78,-1.8216337)
\psdots[linecolor=black, dotsize=0.14](5.98,-0.22163369)
\psdots[linecolor=black, dotsize=0.14](11.58,-0.22163369)
\psline[linecolor=black, linewidth=0.04](5.98,-0.22163369)(8.78,1.3783664)(11.58,-0.22163369)(8.78,0.5783663)(5.98,-0.22163369)(11.58,-0.22163369)(8.78,-1.0216337)(5.98,-0.22163369)(8.78,-1.8216337)(11.58,-0.22163369)
\psline[linecolor=black, linewidth=0.04](8.78,1.3783664)(8.78,-1.8216337)
\psbezier[linecolor=black, linewidth=0.04](5.98,-0.22163369)(5.98,-1.0216337)(8.08,5.4383664)(11.58,-0.22163369178771972)
\rput[bl](8.52,1.5183663){\fontsize{10 pt}{10 pt}\selectfont{$412$}}
\rput[bl](8.82,0.6183663){\fontsize{10 pt}{10 pt}\selectfont{$124$}}
\rput[bl](8.82,-0.04163369){\fontsize{10 pt}{10 pt}\selectfont{$413$}}
\rput[bl](8.82,-0.8216337){\fontsize{10 pt}{10 pt}\selectfont{$134$}}
\rput[bl](8.46,-2.2016337){\fontsize{10 pt}{10 pt}\selectfont{$423$}}
\psdots[linecolor=black, dotsize=0.14](3.2,1.3783664)
\psdots[linecolor=black, dotsize=0.14](3.2,0.5783663)
\psdots[linecolor=black, dotsize=0.14](3.2,-0.22163369)
\psdots[linecolor=black, dotsize=0.14](3.2,-1.0216337)
\psdots[linecolor=black, dotsize=0.14](3.2,-1.8216337)
\psdots[linecolor=black, dotsize=0.14](0.4,-0.22163369)
\psdots[linecolor=black, dotsize=0.14](6.0,-0.22163369)
\psline[linecolor=black, linewidth=0.04](0.4,-0.22163369)(3.2,1.3783664)(6.0,-0.22163369)(3.2,0.5783663)(0.4,-0.22163369)(6.0,-0.22163369)(3.2,-1.0216337)(0.4,-0.22163369)(3.2,-1.8216337)(6.0,-0.22163369)
\psline[linecolor=black, linewidth=0.04](3.2,1.3783664)(3.2,-1.8216337)
\psbezier[linecolor=black, linewidth=0.04](0.4,-0.22163369)(0.4,-1.0216337)(2.5,5.4383664)(6.0,-0.22163369178771972)
\rput[bl](2.94,1.5183663){\fontsize{10 pt}{10 pt}\selectfont{$412$}}
\rput[bl](3.24,0.6183663){\fontsize{10 pt}{10 pt}\selectfont{$124$}}
\rput[bl](3.24,-0.04163369){\fontsize{10 pt}{10 pt}\selectfont{$413$}}
\rput[bl](3.24,-0.8216337){\fontsize{10 pt}{10 pt}\selectfont{$134$}}
\rput[bl](2.88,-2.2016337){\fontsize{10 pt}{10 pt}\selectfont{$423$}}
\psdots[linecolor=black, dotsize=0.14](15.8,1.3783664)
\psdots[linecolor=black, dotsize=0.14](15.8,0.5783663)
\psdots[linecolor=black, dotsize=0.14](15.8,-0.22163369)
\psdots[linecolor=black, dotsize=0.14](15.8,-1.0216337)
\psdots[linecolor=black, dotsize=0.14](15.8,-1.8216337)
\psdots[linecolor=black, dotsize=0.14](13.0,-0.22163369)
\psdots[linecolor=black, dotsize=0.14](18.6,-0.22163369)
\psline[linecolor=black, linewidth=0.04](13.0,-0.22163369)(15.8,1.3783664)(18.6,-0.22163369)(15.8,0.5783663)(13.0,-0.22163369)(18.6,-0.22163369)(15.8,-1.0216337)(13.0,-0.22163369)(15.8,-1.8216337)(18.6,-0.22163369)
\psline[linecolor=black, linewidth=0.04](15.8,1.3783664)(15.8,-1.8216337)
\psbezier[linecolor=black, linewidth=0.04](13.0,-0.22163369)(13.0,-1.0216337)(15.1,5.4383664)(18.6,-0.22163369178771972)
\rput[bl](15.54,1.5183663){\fontsize{10 pt}{10 pt}\selectfont{$412$}}
\rput[bl](15.84,0.6183663){\fontsize{10 pt}{10 pt}\selectfont{$124$}}
\rput[bl](15.84,-0.04163369){\fontsize{10 pt}{10 pt}\selectfont{$413$}}
\rput[bl](15.84,-0.8216337){\fontsize{10 pt}{10 pt}\selectfont{$134$}}
\rput[bl](15.48,-2.2016337){\fontsize{10 pt}{10 pt}\selectfont{$423$}}
\psline[linecolor=black, linewidth=0.04, linestyle=dotted, dotsep=0.10583334cm](11.58,-0.22163369)(12.96,-0.22163369)
\rput[bl](0.0,-0.7016337){\fontsize{10 pt}{10 pt}\selectfont{$324$}}
\rput[bl](5.64,-0.6416337){\fontsize{10 pt}{10 pt}\selectfont{$214$}}
\rput[bl](11.34,-0.6416337){\fontsize{10 pt}{10 pt}\selectfont{$342$}}
\rput[bl](12.6,-0.6416337){\fontsize{10 pt}{10 pt}\selectfont{$214$}}
\rput[bl](18.42,-0.6416337){\fontsize{10 pt}{10 pt}\selectfont{$324$}}
\end{pspicture}
}

\caption{A family of U$3$LC planar graphs}
\label{planar1-fig}
\centering
\end{figure}

We also introduce a U3LC planer graph that does not contain any $K_4$ as a subgraph. It is a counterexample for a Conjecture in {\rm~\cite{MR1899929}}.

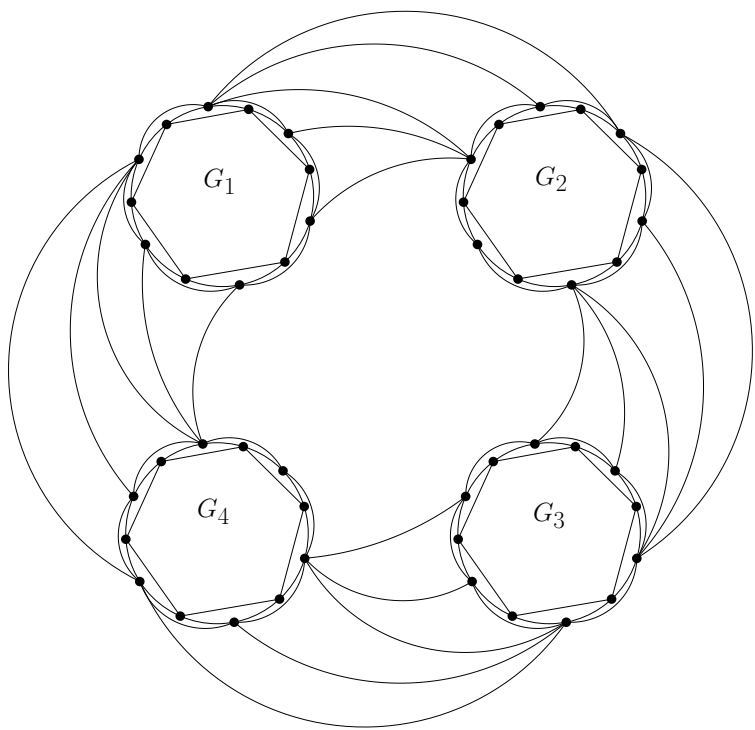
\begin{figure}[H]

\psscalebox{0.4 0.4} 
{
\newrgbcolor{xdxdff}{0 0 0}
\psset{xunit=1.0cm,yunit=1.0cm,algebraic=true,dimen=middle,dotstyle=o,dotsize=9pt 0,linewidth=0.8pt,arrowsize=3pt 2,arrowinset=0.25}
\begin{pspicture*}(0,-14)(25,10)
\pscircle(7.24,3.8){3.}
\psline(5.408985574061948,6.176422978345132)(8.107347062658953,6.6718824963596415)
\psline(8.107347062658953,6.6718824963596415)(10.107337026165851,4.682257546512569)
\psline(10.107337026165851,4.682257546512569)(9.294329756949,1.613740809118493)
\psline(9.294329756949,1.613740809118493)(6.035326486199539,1.0524990036129829)
\psline(6.035326486199539,1.0524990036129829)(4.246557098240255,3.601758748227831)
\psline(4.246557098240255,3.601758748227831)(5.408985574061948,6.176422978345132)
\parametricplot{1.2879484109342172}{3.162503955679627}{1.*1.7741374967508616*cos(t)+0.*1.7741374967508616*sin(t)+6.273943041591789|0.*1.7741374967508616*cos(t)+1.*1.7741374967508616*sin(t)+5.059168816449077}
\parametricplot{0.46203404514034274}{2.0280901344134667}{1.*1.9764720411620749*cos(t)+0.*1.9764720411620749*sin(t)+7.641744874377673|0.*1.9764720411620749*cos(t)+1.*1.9764720411620749*sin(t)+4.9894193083130025}
\parametricplot{-0.5451639760458011}{1.0267619156747703}{1.*2.110230768562858*cos(t)+0.*2.110230768562858*sin(t)+8.318740270402001|0.*2.110230768562858*cos(t)+1.*2.110230768562858*sin(t)+4.064899436637471}
\parametricplot{4.6414099533138975}{6.263799006984064}{1.*2.1641823373559164*cos(t)+0.*2.1641823373559164*sin(t)+7.95930111012953|0.*2.1641823373559164*cos(t)+1.*2.1641823373559164*sin(t)+3.0125746089058616}
\parametricplot{3.4612084584171816}{5.146713400951537}{1.*2.2631563520937377*cos(t)+0.*2.2631563520937377*sin(t)+6.85348764644498|0.*2.2631563520937377*cos(t)+1.*2.2631563520937377*sin(t)+2.9068741229370483}
\parametricplot{2.407330004843078}{4.020494328096271}{1.*1.9625913324215556*cos(t)+0.*1.9625913324215556*sin(t)+5.957074060812201|0.*1.9625913324215556*cos(t)+1.*1.9625913324215556*sin(t)+3.707057330970813}
\pscircle(18.16,3.8){3.}
\psline(16.32898557406195,6.176422978345132)(19.027347062658954,6.671882496359641)
\psline(19.027347062658954,6.671882496359641)(21.02733702616585,4.682257546512569)
\psline(21.02733702616585,4.682257546512569)(20.214329756949002,1.6137408091184948)
\psline(20.214329756949002,1.6137408091184948)(16.95532648619954,1.0524990036129829)
\psline(16.95532648619954,1.0524990036129829)(15.166557098240254,3.601758748227831)
\psline(15.166557098240254,3.601758748227831)(16.32898557406195,6.176422978345132)
\parametricplot{1.2879484109342156}{3.162503955679627}{1.*1.7741374967508599*cos(t)+0.*1.7741374967508599*sin(t)+17.193943041591787|0.*1.7741374967508599*cos(t)+1.*1.7741374967508599*sin(t)+5.059168816449077}
\parametricplot{0.4620340451403446}{2.028090134413464}{1.*1.9764720411620758*cos(t)+0.*1.9764720411620758*sin(t)+18.56174487437767|0.*1.9764720411620758*cos(t)+1.*1.9764720411620758*sin(t)+4.989419308313}
\parametricplot{-0.5451639760458011}{1.026761915674773}{1.*2.110230768562856*cos(t)+0.*2.110230768562856*sin(t)+19.238740270402005|0.*2.110230768562856*cos(t)+1.*2.110230768562856*sin(t)+4.064899436637471}
\parametricplot{4.641409953313896}{6.263799006984063}{1.*2.1641823373559164*cos(t)+0.*2.1641823373559164*sin(t)+18.879301110129532|0.*2.1641823373559164*cos(t)+1.*2.1641823373559164*sin(t)+3.0125746089058625}
\parametricplot{3.461208458417182}{5.146713400951539}{1.*2.2631563520937386*cos(t)+0.*2.2631563520937386*sin(t)+17.773487646444973|0.*2.2631563520937386*cos(t)+1.*2.2631563520937386*sin(t)+2.906874122937047}
\parametricplot{2.4073300048430784}{4.020494328096271}{1.*1.9625913324215567*cos(t)+0.*1.9625913324215567*sin(t)+16.877074060812202|0.*1.9625913324215567*cos(t)+1.*1.9625913324215567*sin(t)+3.7070573309708132}
\pscircle(7.063023843730591,-7.377705472590607){3.}
\psline(5.232009417792538,-5.001282494245476)(7.930370906389543,-4.505822976230966)
\psline(7.930370906389543,-4.505822976230966)(9.930360869896441,-6.495447926078038)
\psline(9.930360869896441,-6.495447926078038)(9.117353600679591,-9.563964663472113)
\psline(9.117353600679591,-9.563964663472113)(5.858350329930129,-10.125206468977623)
\psline(5.858350329930129,-10.125206468977623)(4.0695809419708455,-7.575946724362776)
\psline(4.0695809419708455,-7.575946724362776)(5.232009417792538,-5.001282494245476)
\parametricplot{1.287948410934216}{3.1625039556796275}{1.*1.7741374967508599*cos(t)+0.*1.7741374967508599*sin(t)+6.096966885322377|0.*1.7741374967508599*cos(t)+1.*1.7741374967508599*sin(t)+-6.118536656141529}
\parametricplot{0.4620340451403436}{2.028090134413466}{1.*1.9764720411620769*cos(t)+0.*1.9764720411620769*sin(t)+7.464768718108263|0.*1.9764720411620769*cos(t)+1.*1.9764720411620769*sin(t)+-6.188286164277606}
\parametricplot{-0.5451639760457985}{1.026761915674769}{1.*2.1102307685628623*cos(t)+0.*2.1102307685628623*sin(t)+8.141764114132586|0.*2.1102307685628623*cos(t)+1.*2.1102307685628623*sin(t)+-7.112806035953139}
\parametricplot{4.6414099533139}{6.263799006984061}{1.*2.1641823373559235*cos(t)+0.*2.1641823373559235*sin(t)+7.782324953860115|0.*2.1641823373559235*cos(t)+1.*2.1641823373559235*sin(t)+-8.16513086368474}
\parametricplot{3.461208458417178}{5.14671340095154}{1.*2.2631563520937323*cos(t)+0.*2.2631563520937323*sin(t)+6.676511490175566|0.*2.2631563520937323*cos(t)+1.*2.2631563520937323*sin(t)+-8.270831349653568}
\parametricplot{2.407330004843077}{4.020494328096271}{1.*1.9625913324215538*cos(t)+0.*1.9625913324215538*sin(t)+5.78009790454279|0.*1.9625913324215538*cos(t)+1.*1.9625913324215538*sin(t)+-7.470648141619795}
\pscircle(17.98302384373059,-7.377705472590607){3.}
\psline(16.152009417792538,-5.001282494245475)(18.850370906389543,-4.5058229762309665)
\psline(18.850370906389543,-4.5058229762309665)(20.850360869896438,-6.495447926078038)
\psline(20.850360869896438,-6.495447926078038)(20.03735360067959,-9.563964663472113)
\psline(20.03735360067959,-9.563964663472113)(16.778350329930127,-10.125206468977623)
\psline(16.778350329930127,-10.125206468977623)(14.989580941970843,-7.575946724362776)
\psline(14.989580941970843,-7.575946724362776)(16.152009417792538,-5.001282494245475)
\parametricplot{1.2879484109342156}{3.1625039556796275}{1.*1.7741374967508599*cos(t)+0.*1.7741374967508599*sin(t)+17.016966885322375|0.*1.7741374967508599*cos(t)+1.*1.7741374967508599*sin(t)+-6.118536656141529}
\parametricplot{0.46203404514034335}{2.028090134413465}{1.*1.9764720411620742*cos(t)+0.*1.9764720411620742*sin(t)+18.38476871810826|0.*1.9764720411620742*cos(t)+1.*1.9764720411620742*sin(t)+-6.188286164277605}
\parametricplot{-0.5451639760458047}{1.026761915674777}{1.*2.11023076856285*cos(t)+0.*2.11023076856285*sin(t)+19.061764114132604|0.*2.11023076856285*cos(t)+1.*2.11023076856285*sin(t)+-7.1128060359531355}
\parametricplot{4.641409953313899}{6.263799006984062}{1.*2.1641823373559235*cos(t)+0.*2.1641823373559235*sin(t)+18.702324953860114|0.*2.1641823373559235*cos(t)+1.*2.1641823373559235*sin(t)+-8.165130863684741}
\parametricplot{3.461208458417178}{5.146713400951539}{1.*2.2631563520937314*cos(t)+0.*2.2631563520937314*sin(t)+17.596511490175565|0.*2.2631563520937314*cos(t)+1.*2.2631563520937314*sin(t)+-8.270831349653568}
\parametricplot{2.407330004843081}{4.020494328096269}{1.*1.9625913324215616*cos(t)+0.*1.9625913324215616*sin(t)+16.700097904542798|0.*1.9625913324215616*cos(t)+1.*1.9625913324215616*sin(t)+-7.470648141619793}
\parametricplot{0.7976244612339359}{1.9468396888227306}{1.*8.118131770767691*cos(t)+0.*8.118131770767691*sin(t)+9.750418363318909|0.*8.118131770767691*cos(t)+1.*8.118131770767691*sin(t)+-0.788066913445526}
\parametricplot{1.0430376122033562}{1.818043032674664}{1.*8.030127583522964*cos(t)+0.*8.030127583522964*sin(t)+11.376235021755795|0.*8.030127583522964*cos(t)+1.*8.030127583522964*sin(t)+-1.9154599433627963}
\parametricplot{1.4563827320567855}{2.4241903030727996}{1.*6.1049183639277*cos(t)+0.*6.1049183639277*sin(t)+14.723230695329606|0.*6.1049183639277*cos(t)+1.*6.1049183639277*sin(t)+-1.0429318116957487}
\parametricplot{0.8433906634939082}{2.298201990095885}{1.*8.211339304272135*cos(t)+0.*8.211339304272135*sin(t)+12.229089788539724|0.*8.211339304272135*cos(t)+1.*8.211339304272135*sin(t)+0.6297445198973521}
\parametricplot{0.5307002462138974}{2.4794867013759823}{1.*8.213591570905459*cos(t)+0.*8.213591570905459*sin(t)+13.247143594687728|0.*8.213591570905459*cos(t)+1.*8.213591570905459*sin(t)+1.713265013968647}
\parametricplot{2.3844334399537823}{3.8670885941781896}{1.*8.27753686455291*cos(t)+0.*8.27753686455291*sin(t)+10.516225789310285|0.*8.27753686455291*cos(t)+1.*8.27753686455291*sin(t)+-0.6634333650936594}
\parametricplot{2.040734916575027}{4.246417331413974}{1.*7.845454416401478*cos(t)+0.*7.845454416401478*sin(t)+8.052862581801131|0.*7.845454416401478*cos(t)+1.*7.845454416401478*sin(t)+-1.9729045524532125}
\parametricplot{2.4620605988709703}{4.257415590158902}{1.*6.181277372614647*cos(t)+0.*6.181277372614647*sin(t)+9.308404339628144|0.*6.181277372614647*cos(t)+1.*6.181277372614647*sin(t)+1.1375774578768725}
\parametricplot{2.9881160215902125}{3.8512214157953197}{1.*8.217876248766775*cos(t)+0.*8.217876248766775*sin(t)+12.826225856957802|0.*8.217876248766775*cos(t)+1.*8.217876248766775*sin(t)+0.9394799038125918}
\parametricplot{2.3204404368642146}{3.5099248191424763}{1.*4.824888827067628*cos(t)+0.*4.824888827067628*sin(t)+11.093393128656995|0.*4.824888827067628*cos(t)+1.*4.824888827067628*sin(t)+-2.677646104062557}
\parametricplot{3.6428046472793576}{5.299441599221046}{1.*6.012075891403089*cos(t)+0.*6.012075891403089*sin(t)+15.218696330725306|0.*6.012075891403089*cos(t)+1.*6.012075891403089*sin(t)+-5.318348528992833}
\parametricplot{3.8901302171207837}{5.2548253663098725}{1.*4.405340061198156*cos(t)+0.*4.405340061198156*sin(t)+13.173826939149718|0.*4.405340061198156*cos(t)+1.*4.405340061198156*sin(t)+-5.208950324305381}
\parametricplot{4.797066384515911}{5.36669195779326}{1.*10.10842105226708*cos(t)+0.*10.10842105226708*sin(t)+9.09116830541027|0.*10.10842105226708*cos(t)+1.*10.10842105226708*sin(t)+1.8651189595353237}
\parametricplot{4.002215080046307}{5.422562880723072}{1.*8.374674461956047*cos(t)+0.*8.374674461956047*sin(t)+13.088842347738016|0.*8.374674461956047*cos(t)+1.*8.374674461956047*sin(t)+-3.973779454055089}
\parametricplot{3.555852002800916}{5.678086279922869}{1.*8.067893065505206*cos(t)+0.*8.067893065505206*sin(t)+11.913437817945141|0.*8.067893065505206*cos(t)+1.*8.067893065505206*sin(t)+-5.734495170627755}
\parametricplot{-1.0090198094237097}{1.0850080015374755}{1.*8.134412428102484*cos(t)+0.*8.134412428102484*sin(t)+16.53297621787999|0.*8.134412428102484*cos(t)+1.*8.134412428102484*sin(t)+-1.3228482227501959}
\parametricplot{-0.7361280482205004}{0.7044647751728856}{1.*8.474059674156793*cos(t)+0.*8.474059674156793*sin(t)+14.586196790761457|0.*8.474059674156793*cos(t)+1.*8.474059674156793*sin(t)+-2.5174004977123197}
\parametricplot{-0.5821813329004293}{1.046098183750467}{1.*6.402021972357773*cos(t)+0.*6.402021972357773*sin(t)+15.518714215488176|0.*6.402021972357773*cos(t)+1.*6.402021972357773*sin(t)+-4.686949644049169}
\parametricplot{-0.32224315207225107}{0.7778121946936477}{1.*6.049656986099362*cos(t)+0.*6.049656986099362*sin(t)+14.415737517693389|0.*6.049656986099362*cos(t)+1.*6.049656986099362*sin(t)+-3.3913382181890865}
\parametricplot{-0.883093138061092}{0.4302730868881969}{1.*4.428161267824218*cos(t)+0.*4.428161267824218*sin(t)+14.701275838113478|0.*4.428161267824218*cos(t)+1.*4.428161267824218*sin(t)+-0.9932286771151639}
\rput[tl](6.621081890881592,4.700485818484407){\fontsize{33pt}{33pt}$G_1$}
\rput[tl](17.52489960235613,4.771060690403336){\fontsize{33pt}{33pt}$G_2$}
\rput[tl](17.4543247304372,-6.379769072787405){\fontsize{33pt}{33pt}$G_3$}
\rput[tl](6.409357275124805,-6.2386193289495475){\fontsize{33pt}{33pt}$G_4$}
\begin{scriptsize}
\psdots[dotstyle=*,linecolor=xdxdff](8.107347062658953,6.6718824963596415)
\psdots[dotstyle=*,linecolor=xdxdff](9.41097914573562,5.8704708519515645)
\psdots[dotstyle=*,linecolor=xdxdff](10.107337026165851,4.682257546512569)
\psdots[dotstyle=*,linecolor=xdxdff](10.123076779374983,2.970621748398977)
\psdots[dotstyle=*,linecolor=xdxdff](9.294329756949,1.613740809118493)
\psdots[dotstyle=*,linecolor=xdxdff](7.805818504007426,0.8538415825820227)
\psdots[dotstyle=*,linecolor=xdxdff](6.035326486199539,1.0524990036129829)
\psdots[dotstyle=*,linecolor=xdxdff](4.704946125277688,2.195786219902287)
\psdots[dotstyle=*,linecolor=xdxdff](4.246557098240255,3.601758748227831)
\psdots[dotstyle=*,linecolor=xdxdff](4.500193430394818,5.0220719950756925)
\psdots[dotstyle=*,linecolor=xdxdff](5.408985574061948,6.176422978345132)
\psdots[dotstyle=*,linecolor=xdxdff](6.769089788539724,6.762810080437562)
\psdots[dotstyle=*,linecolor=xdxdff](19.027347062658954,6.671882496359641)
\psdots[dotstyle=*,linecolor=xdxdff](20.33097914573562,5.870470851951566)
\psdots[dotstyle=*,linecolor=xdxdff](21.02733702616585,4.682257546512569)
\psdots[dotstyle=*,linecolor=xdxdff](21.043076779374985,2.970621748398977)
\psdots[dotstyle=*,linecolor=xdxdff](20.214329756949002,1.6137408091184948)
\psdots[dotstyle=*,linecolor=xdxdff](18.725818504007425,0.8538415825820227)
\psdots[dotstyle=*,linecolor=xdxdff](16.95532648619954,1.0524990036129829)
\psdots[dotstyle=*,linecolor=xdxdff](15.624946125277688,2.195786219902287)
\psdots[dotstyle=*,linecolor=xdxdff](15.166557098240254,3.601758748227831)
\psdots[dotstyle=*,linecolor=xdxdff](15.420193430394818,5.0220719950756925)
\psdots[dotstyle=*,linecolor=xdxdff](16.32898557406195,6.176422978345132)
\psdots[dotstyle=*,linecolor=xdxdff](17.689089788539725,6.762810080437562)
\psdots[dotstyle=*,linecolor=xdxdff](7.930370906389543,-4.505822976230966)
\psdots[dotstyle=*,linecolor=xdxdff](9.23400298946621,-5.307234620639043)
\psdots[dotstyle=*,linecolor=xdxdff](9.930360869896441,-6.495447926078038)
\psdots[dotstyle=*,linecolor=xdxdff](9.946100623105576,-8.20708372419163)
\psdots[dotstyle=*,linecolor=xdxdff](9.117353600679591,-9.563964663472113)
\psdots[dotstyle=*,linecolor=xdxdff](7.628842347738017,-10.323863890008585)
\psdots[dotstyle=*,linecolor=xdxdff](5.858350329930129,-10.125206468977623)
\psdots[dotstyle=*,linecolor=xdxdff](4.527969969008279,-8.98191925268832)
\psdots[dotstyle=*,linecolor=xdxdff](4.0695809419708455,-7.575946724362776)
\psdots[dotstyle=*,linecolor=xdxdff](4.323217274125408,-6.155633477514915)
\psdots[dotstyle=*,linecolor=xdxdff](5.232009417792538,-5.001282494245476)
\psdots[dotstyle=*,linecolor=xdxdff](6.592113632270315,-4.414895392153045)
\psdots[dotstyle=*,linecolor=xdxdff](18.850370906389543,-4.5058229762309665)
\psdots[dotstyle=*,linecolor=xdxdff](20.154002989466207,-5.307234620639041)
\psdots[dotstyle=*,linecolor=xdxdff](20.850360869896438,-6.495447926078038)
\psdots[dotstyle=*,linecolor=xdxdff](20.866100623105574,-8.20708372419163)
\psdots[dotstyle=*,linecolor=xdxdff](20.03735360067959,-9.563964663472113)
\psdots[dotstyle=*,linecolor=xdxdff](18.548842347738013,-10.323863890008585)
\psdots[dotstyle=*,linecolor=xdxdff](16.778350329930127,-10.125206468977623)
\psdots[dotstyle=*,linecolor=xdxdff](15.447969969008277,-8.98191925268832)
\psdots[dotstyle=*,linecolor=xdxdff](14.989580941970843,-7.575946724362776)
\psdots[dotstyle=*,linecolor=xdxdff](15.243217274125406,-6.155633477514915)
\psdots[dotstyle=*,linecolor=xdxdff](16.152009417792538,-5.001282494245475)
\psdots[dotstyle=*,linecolor=xdxdff](17.512113632270314,-4.414895392153044)
\end{scriptsize}
\end{pspicture*}
}

\caption{A U$3$LC planar graph without $K_4$ as a subgraph}
\label{planar2-fig}
\centering
\end{figure}

In the graph shown in figure~\ref{planar2-fig}, we assign list of colors $\{1, 2, 3, 4\} \setminus \{ i\}$ to all of the vertices of $G_i$ $(1\leq i\leq 4)$. The proof of uniqueness of coloring is similar to  Lemma~\ref{tight}.

With the same average-degree method as in Corollary~\ref{cor1}, which showed every planar graph $G$ has property $M(4)$, it is easy to deduce that graphs on nonplanar surfaces have property $M(k)$ for some $k$. If a graph $G$ has a $2$-cell embedding on a surface of Euler genus $g \geq 1$, it satisfies the Euler-Poincar{\'e} formula $|V(G)|-|E(G)|+|F(G)|=2-g$ so that its average degree is bounded by $2\frac{|E(G)|}{|V(G)|} \leq 6+6\frac{(g-2)}{|V(G)|}$. For $g\geq 1$, define the Heawood number to be $H(g)=\lfloor \frac{(7+\sqrt{24g+1})}{2}\rfloor$. It is known that every graph that embeds on a surface of Euler genus $g\geq 1$ has chromatic number at most $H(g)$ and also has list chromatic number at most $H(g)$~\cite[pages 3, 20]{MR1304254}. Just as for the plane, average degree arguments show that for a graph $G$ on a surface of Euler genus $g\geq 1$, $G$ has property $M(H(g)-1)$, but more is true as shown in the next theorem.

\begin{theorem}
Given integer $i\geq 2$, if $g\geq \frac{(2i^2-3i+1)}{3}$ and if a graph $G$ embeds on a surface of Euler genus $g$, then $G$ has property $M(H(g)-i)$. Equivalently given $g\geq 1$, if $i \leq \frac{(3+\sqrt{24g+1})}{4}$ and if $G$ embeds on a surface of Euler genus $g$, then $G$ satisfies property $M(H(g)-i)$.
\end{theorem}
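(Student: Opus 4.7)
I would argue by contradiction. Suppose $G$ embeds on a surface of Euler genus $g$, set $K = H(g) - i$, and assume $G$ is U$K$LC; write $n = |V(G)|$. The plan is to extract incompatible density bounds on $n$. From list coloring: the contrapositive of Theorem~\ref{main_result} says the average degree of $G$ is at least $2K-2$, which combined with the simple-graph bound ``average degree $\leq n-1$'' yields $n \geq 2K-1$. From the embedding: the stated inequality $2|E(G)|/n \leq 6 + 6(g-2)/n$ combined with the same lower bound on average degree gives $n \leq 3(g-2)/(K-4)$, valid when $K > 4$. Combining yields $(2K-1)(K-4) \leq 3(g-2)$, i.e.,
\begin{equation*}
2K^2 - 9K + 10 \leq 3g.
\end{equation*}

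The substance of the proof is then to show that the hypothesis on $g$ forces the reverse strict inequality $2K^2 - 9K + 10 > 3g$. The hypothesis $4i - 3 \leq \sqrt{24g+1}$ (equivalent to $g \geq (2i^2-3i+1)/3$) gives $(7 + \sqrt{24g+1})/2 \geq 2i+2$ and therefore $H(g) \geq 2i+2$, so $K \geq i+2$. Separately, since $H(g)$ is a floor, one has the strict inequality $H(g)+1 > (7+\sqrt{24g+1})/2$, which rearranges to $3g < (H(g)^2 - 5H(g) + 6)/2$. Substituting $K = H(g) - i$ into the target and clearing, the needed implication reduces to the quadratic inequality $3H(g)^2 - (8i+13)H(g) + (4i^2+18i+14) \geq 0$ in $H(g)$. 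A computation shows the discriminant of this quadratic is the perfect square $(4i-1)^2$, so its roots are exactly $(2i+7)/3$ and $2i+2$; the quadratic is therefore nonnegative precisely when $H(g) \geq 2i+2$, which is what the hypothesis provides. Chaining this with the strict $3g$ bound delivers $2K^2 - 9K + 10 > 3g$ and completes the contradiction.

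One small corner case is $K \leq 4$, where the upper bound on $n$ was unavailable. Since $K \geq i+2 \geq 4$, only $K = 4$ remains, which forces $i=2$, $H(g)=6$, and hence $g=1$; then the average degree is at most $6-6/n < 6 = 2K-2$, and Theorem~\ref{main_result} applies directly. The step I expect to be the main obstacle is the algebraic one: verifying that the chosen linear combination of the two density bounds reduces to a quadratic whose discriminant is the perfect square $(4i-1)^2$ with a root landing exactly on $H(g)=2i+2$. It is precisely this perfect-square factorization that matches the boundary of the hypothesis, so without it the theorem would be weaker.
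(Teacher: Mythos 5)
Your proof is correct, and it reassembles the same two ingredients as the paper --- Theorem~\ref{main_result} and the genus bound $2|E(G)|/n\le 6+6(g-2)/n$ --- in a genuinely different way. The paper splits on whether $n<\tfrac{7+\sqrt{24g+1}}{2}$: if so, $G$ is a subgraph of $K_{H(g)}$, and either $G=K_{H(g)}$ (which has property $M(2)$ by the characterization in Theorem~\ref{m2}) or its average degree is strictly below $H(g)-1\le 2(H(g)-i)-2$; if not, the Euler bound gives average degree at most $\tfrac{5+\sqrt{24g+1}}{2}<H(g)\le 2(H(g)-i)-2$. That threshold is exactly where the two upper bounds $n-1$ and $6+6(g-2)/n$ on the average degree cross, which keeps the paper's algebra linear in $H(g)$. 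You instead multiply the lower bound $n\ge 2K-1$ against the upper bound $n\le 3(g-2)/(K-4)$, arriving at the quadratic $3H^2-(8i+13)H+(4i^2+18i+14)\ge 0$; I checked that its discriminant is indeed $(4i-1)^2$ with larger root $2i+2$, matching the hypothesis $H(g)\ge 2i+2$, so the contradiction goes through. What your packaging buys is that the complete graph $K_{H(g)}$ needs no special treatment (the inequality $n\ge 2K-1$ already excludes it), at the price of the corner case $K=4$, which forces $i=2$, $g=1$ and which you dispatch correctly via the direct average-degree bound. Two cosmetic points: the quadratic is also nonnegative for $H\le\tfrac{2i+7}{3}$, so ``nonnegative precisely when $H(g)\ge 2i+2$'' overstates matters (only the direction you use is needed); and the bound $2|E(G)|/n\le 6+6(g-2)/n$ presumes $n\ge 3$, a degeneracy the paper ignores as well.
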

\begin{proof}
Suppose $G$ with $n =|V(G)|$  vertices embeds on a surface of Euler genus $g \geq 1$. From Euler's formula its average degree is bounded by $\frac{2|E(G)|}{n}\leq 6+ \frac{6(g-2)}{n}$. We consider two cases.

Suppose $n<\frac{(7+\sqrt{24g+1})}{2}$ so that $n\leq H(g)$. If $G=K_{H(g)}$, then $m(G)=2<H(g)-i$ by {\rm~\cite{MR1625324}}. Otherwise $G$ is a proper subgraph of $K_{H(g)}$ and its average degree is strictly less than $H(g)-1$. We have $H(g)-1 \leq 2H(g) - 2i -2$, so that $m(G) \leq H(g) - i$ by Theorem~\ref{main_result}.

Next suppose $n\geq \frac{(7+\sqrt{24g+1})}{2}$. Then:
\begin{equation*}
\begin{split}
&\frac{2|E(G)|}{n}\leq 6+\frac{6(g-2)}{n}\\
&\leq 6+\frac{12(g-2)}{(7+\sqrt{24g+1})}\\
&=\frac{(5+\sqrt{24g+1})}{2}\\
&<H(g)\leq 2H(g)-2i-2.
\end{split}
\end{equation*}

Thus we have again that $m(G)\leq H(g)-i$ by Theorem~\ref{main_result}.	
\qed
\end{proof}

We do not have examples to show that these bounds are best possible.

\section{Bounds on $m(G)$ for $k$-regular graphs $G$}
\label{bounds-sec}

\begin{question}
What is the best upper bound for the $m-number$ of a $k$-regular graph?
\end {question}

\begin{question}
What is the best upper bound for the $m-number$ for all but finitely many $k$-regular graphs?
\end {question}

In the following we answer these questions for $k=3$, we investigate them in depth for $k=4$, and we show that $m(G) \le k$ for all $k$-regular graphs $G$.

\begin{theorem} 
Except for $K_{3,3}$ and $K_4$, the $m-number$ of every connected 3-regular graph is 3.
\end{theorem}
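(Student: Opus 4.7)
The plan is to split $m(G)=3$ into the two inequalities $m(G)\le 3$ and $m(G)\ge 3$. For the upper bound I would apply Theorem~\ref{main_result} directly: any $3$-regular graph has average degree exactly $3$, which is strictly less than $2k-2=4$ when $k=3$, so every $3$-regular graph (connected or not, including both exceptions) has property $M(3)$, giving $m(G)\le 3$ with no case analysis at all.

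For the lower bound, I want to show that every connected $3$-regular graph $G \notin \{K_4, K_{3,3}\}$ fails to have property $M(2)$. Arguing by contradiction and invoking Theorem~\ref{m2}, I may assume every block of $G$ is a cycle, a complete graph, or a complete bipartite graph. The key step is then to examine a leaf block $B$ of $G$, i.e.\ a block containing at most one cut vertex of $G$: every non-cut vertex $v \in B$ must satisfy $\deg_B(v) = \deg_G(v) = 3$ since all of its incident edges lie in $B$.

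Testing this constraint against the three allowed block types is the core of the argument. A cycle $C_n$ ($n\ge 3$) is ruled out at once because every vertex has block-degree $2$. For $K_n$, the condition $\deg_B(v) = 3$ forces $n = 4$, so $B = K_4$. For $K_{m,n}$ one observes that a non-cut vertex on the side of size $m$ has block-degree $n$, so chasing the constraint on whichever side contains a non-cut vertex forces $\{m,n\} = \{3,3\}$, i.e.\ $B = K_{3,3}$. In either surviving case the putative cut vertex of $B$ already has block-degree $3$ inside $B$ and therefore cannot actually be a cut vertex; so $B$ is the whole graph and $G \in \{K_4, K_{3,3}\}$, contradicting the assumption. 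Combined with the upper bound this yields $m(G) = 3$; the two exceptions in the statement are explained by the fact that both $K_4$ and $K_{3,3}$ themselves satisfy property $M(2)$ by Theorem~\ref{m2}.

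The only slightly delicate step is the $K_{m,n}$ sub-case of the leaf-block analysis: one has to separately handle the possibility that the (single) cut vertex of $B$ lies on either of the two sides, as well as the degenerate stars $K_{1,n}$, to conclude that the only complete bipartite block compatible with $3$-regularity is $K_{3,3}$. Once this short case analysis is done, the rest of the proof is a one-line invocation of Theorems~\ref{main_result} and~\ref{m2}.
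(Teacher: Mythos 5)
Your proof is correct and follows essentially the same route as the paper: the upper bound $m(G)\le 3$ comes from Theorem~\ref{main_result} applied to average degree $3<4$, and the lower bound comes from Theorem~\ref{m2} by checking that no block of a connected $3$-regular graph other than $K_4$ or $K_{3,3}$ itself can be a cycle, a complete graph, or a complete bipartite graph. Your leaf-block case analysis (including the stars $K_{1,n}$ and the observation that a cut vertex of a $K_4$ or $K_{3,3}$ block would have degree exceeding $3$) just spells out explicitly what the paper's proof asserts in one line, so it is a valid, slightly more detailed rendering of the same argument.
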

\begin{proof}
Let $G$ be a 3-regular connected graph. If $G$ is a 2-connected graph then by Theorem~\ref{m2}, $G$ has property $M(2)$ if and only if it is a $K_{3,3}$ or $K_4$, so $m(G)\geq 3$. Otherwise there is a cut-vertex $v$ of $G$ in an end-block $B$ of $G$.  
By definition all vertices of $B$ have degree $3$ except for $v$, so $B$ does not satisfy the conditions of Theorem~\ref{m2} and $m(G) \geq 3$. Also according to Theorem~\ref{main_result}, $m(G) \leq 3$  and the proof is complete.
\qed
\end{proof}

\begin{definition}
We define a 
\begin{sf}
Strip Triangle $ST_n$
\end{sf}
to be a graph that is obtained by adding a new vertex, say $t$, for each pair of adjacent vertices $(u,v) \in P_{n+1}$ and replacing the edge $(u,v)$ by a triangle on $\{u,t,v\}$.
\end{definition}

\begin{theorem}
There exist infinitely many 4-regular graphs $G$ with $m(G)=3$.
\end{theorem}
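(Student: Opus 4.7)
To prove the theorem I would exhibit an explicit infinite family $\{G_n\}_{n\ge 1}$ of $4$-regular graphs with $m(G_n)=3$; by definition this requires $G_n$ to be U2LC (so $m(G_n)\ge 3$) and to have property $M(3)$ (so $m(G_n)\le 3$). The building block I propose is the $4$-prism: two disjoint copies of $K_4$ joined by a perfect matching, giving a $4$-regular, $2$-connected graph on $8$ vertices with two induced $K_4$'s. I define $G_n$ as either the disjoint union of $n$ copies of the $4$-prism or, for a connected family, as a cyclic chain of $n$ copies joined by edge-exchanges (delete one matching edge in each of two consecutive copies and replace with two new crossing edges), a modification that preserves $4$-regularity, $2$-connectivity, and each induced $K_4$.

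\textbf{Verification.} That $G_n$ is $4$-regular is immediate from a degree count. To show $m(G_n)\ge 3$ I apply Theorem~\ref{m2}: each $4$-prism copy is $2$-connected, contains triangles (so it is not a cycle and not bipartite), has eight vertices (so it is not complete, since $K_8$ has degree $7$), and is not complete bipartite; hence each copy is U2LC, and this property passes to the disjoint union or to the edge-exchanged chain. To show $m(G_n)\le 3$, I observe that the average degree of $G_n$ equals $4=2(3)-2$, so Theorem~\ref{main_result} does not apply directly; instead I appeal to Theorem~\ref{induced} with $H$ equal to one of the induced $K_4$'s of $G_n$. Since $K_4$ is a complete graph, Theorem~\ref{m2} gives that $K_4$ has property $M(2)$, so $k=2$. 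Each vertex of the induced $K_4$ has degree $3$ within $K_4$ and degree $4$ in $G_n$, so it has exactly $l=1$ external neighbor in $V(G_n)\setminus V(K_4)$. Theorem~\ref{induced} with $k=2$ and $l=1$ therefore yields that $G_n$ has property $M(2+1)=M(3)$. Varying $n$ gives infinitely many pairwise non-isomorphic examples.

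\textbf{Main obstacle.} The delicate point is that a $4$-regular graph has average degree exactly on the boundary $2(3)-2=4$ of Theorem~\ref{main_result}, so $M(3)$ cannot be concluded by average-degree counting alone; one must locate an induced subgraph dense enough to invoke Theorem~\ref{induced} with $l=1$. The induced $K_4$ is chosen for precisely this purpose, since its vertices have degree three within the subgraph, leaving only one neighbor outside in the ambient $4$-regular graph. The $4$-prism building block realizes this minimum density in the simplest way, and both the disjoint-union and the edge-exchanged-chain variants extend it to an infinite family while preserving all needed properties.
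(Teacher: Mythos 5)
Your proof is correct and follows essentially the same route as the paper: both arguments exhibit an infinite family of $4$-regular graphs containing an induced $K_4$ and then apply Theorem~\ref{induced} with $k=2$ and $l=1$ to obtain property $M(3)$, with Theorem~\ref{m2} supplying the lower bound $m(G)\geq 3$. The only difference is the concrete family (your $K_4$-prisms, possibly chained, versus the paper's strip triangles $ST_n$ with hanging copies of $K_5$), and your write-up is if anything more explicit about the lower bound.
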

\begin{proof}
Consider $H_n$ as a graph which is obtained by extending $ST_n$ by "hanging" a $K_5$ on each degree-two vertex $t \in V(ST_n)$. Hanging a $K_5$ on a vertex $t$ means replacing an edge $(u,v)$ of $K_5$ with the path $(u,t,v)$. So $H_n$ is a connected 4-regular graph. We prove that $m(H_n)=3$: We know that $H_n$ contains an induced subgraph $K_4$, so by Theorem~\ref{induced}  it follows that $H_n$ has property $M(3)$.
\qed
\end{proof}

The following corollary is immediate from Theorem~\ref{main_result}.

\begin{corollary}(\cite{MR1899929})
Every $k$-regular graph $G$ $(k\geq 2)$ has property $M(\lceil \frac{k+1}{2}\rceil + 1)$.
\end{corollary}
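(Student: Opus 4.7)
The plan is a one-line invocation of Theorem~\ref{main_result}: a $k$-regular graph $G$ has average degree exactly $k$, and Theorem~\ref{main_result} tells us that $G$ has property $M(m)$ whenever its average degree is strictly less than $2m-2$, i.e., whenever $m > \frac{k}{2}+1$. So I would take $m = \lceil \frac{k+1}{2}\rceil + 1$ and just check this inequality.

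The verification splits by parity. If $k$ is even, then $\lceil \frac{k+1}{2}\rceil = \frac{k}{2}+1$, so $m = \frac{k}{2}+2$ and $2m-2 = k+2 > k$. If $k$ is odd, then $\lceil \frac{k+1}{2}\rceil = \frac{k+1}{2}$, so $m = \frac{k+3}{2}$ and $2m-2 = k+1 > k$. In both cases the average degree $k$ is strictly less than $2m-2$, so Theorem~\ref{main_result} directly yields property $M(\lceil\frac{k+1}{2}\rceil+1)$.

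There is no real obstacle here; the only point that deserves attention is that Theorem~\ref{main_result} requires \emph{strict} inequality of the average degree against $2m-2$, so one must confirm that the $+1$ in the definition of $m$ buys us the needed strictness in the odd case (where $2m-2 = k+1$ rather than $k+2$). Both parity cases give this, so the corollary follows at once and no further combinatorial argument is needed.
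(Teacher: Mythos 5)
Your proof is correct and matches the paper's approach: the paper presents this corollary as immediate from Theorem~\ref{main_result}, which is exactly the one-line average-degree computation you carry out. The parity check confirming $k < 2\left(\lceil\frac{k+1}{2}\rceil+1\right)-2$ in both cases is all that is needed.
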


\begin{corollary}
For every graph $G$ with average degree less than $4$, the conjecture $\chi_u(G) \leq \Delta + 1$ in~\cite{MR1906860}, holds.
\end{corollary}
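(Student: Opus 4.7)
The plan is to use Theorem~\ref{main_result} together with Theorem~\ref{chi_theorem} and the greedy bound $\chi(G)\leq\Delta+1$ to reduce the computation of $\chi_u(G)$ to the cases $k=1$ and $k=2$, both of which can then be bounded above by $\Delta+1$. Since for every $k\geq 3$ we have $2k-2\geq 4$, the hypothesis that the average degree of $G$ is less than $4$ entails, via Theorem~\ref{main_result}, that $G$ has property $M(k)$, and hence $\chi_u(G,k)=0$, for every $k\geq 3$. Consequently $\chi_u(G)=\max\{\chi_u(G,1),\chi_u(G,2)\}$, and it suffices to bound these two terms.

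The bound on $\chi_u(G,1)$ is immediate: with singleton lists the coloring is forced, so the minimum number of colors needed for a unique proper list coloring is exactly $\chi(G)$, and greedy coloring gives $\chi_u(G,1)=\chi(G)\leq\Delta+1$. For $\chi_u(G,2)$ I would apply Theorem~\ref{chi_theorem}: if $G$ is U2LC then $\chi_u(G,2)\leq\max(3,\chi(G))\leq\max(3,\Delta+1)$, and otherwise $\chi_u(G,2)=0$. Whenever $\Delta\geq 2$ this maximum equals $\Delta+1$, and the conjectured inequality $\chi_u(G)\leq\Delta+1$ follows.

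The only delicate point, and the main obstacle, is the boundary regime $\Delta\leq 1$, where $\max(3,\Delta+1)=3$ overshoots the desired bound $\Delta+1$. I would dispose of this case by direct inspection: such a $G$ is a disjoint union of isolated vertices and single edges, and no size-$2$ list assignment on such a graph admits a unique proper list coloring (an isolated vertex always yields two choices from its list, and any pair of size-$2$ lists on the endpoints of an edge leaves at least two valid proper colorings). Hence $G$ is not U2LC when $\Delta\leq 1$, so $\chi_u(G,2)=0$ and $\chi_u(G)=\chi(G)\leq\Delta+1$ in this range as well, completing the proof.
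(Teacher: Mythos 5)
Your proof is correct and takes essentially the same route as the paper's: Theorem~\ref{main_result} eliminates $\chi_u(G,k)$ for all $k\geq 3$ (since $2k-2\geq 4$ there), and Theorem~\ref{chi_theorem} bounds $\chi_u(G,2)$. The paper's own proof is a two-line sketch that leaves the $\Delta\leq 1$ boundary case (where $\max(3,\chi(G))$ exceeds $\Delta+1$) implicit, so your explicit check that such graphs are not U2LC is a welcome completion of the same argument rather than a different approach.
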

\begin{proof}
By Theorem~\ref{main_result} each graph $G$ with the above condition has property $M(3)$, and by Theorem~\ref{chi_theorem} the result is clear.
\qed
\end{proof}

\begin{theorem}
In every $k$-regular graph $G$ with a list of $k$ colors on each of its vertices, if there is at least one list coloring for the graph, then there are at least $n(\frac{k}{4}-1)+1$ list colorings.
\end{theorem}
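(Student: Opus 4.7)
The plan is to mirror the counting argument in the proof of Theorem~\ref{main_result}. Given any proper $L$-coloring $c$ of the $k$-regular graph $G$, apply the $(G,c)$-directing procedure to obtain $D(G,c)$. For each pair $(v,t)$ with $t\in L(v)\setminus\{c(v)\}$, the bound $d^{-}(v,t)-d^{+}(v,t)\le N(v,t)-2$ holds unless $(v,t)$ is ``bad'' in one of two ways: either it is \emph{Type A} with $N(v,t)=0$ (so $v$ can be recolored to $t$ directly), or \emph{Type B} with $d^{-}(v,t)=N(v,t)\ge 2$ (every $t$-colored neighbor of $v$ directs into $v$, violating Lemma~\ref{lmleast}); in either case the excess over the bound is exactly $2$. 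Using $k$-regularity and summing the per-vertex inequality $d^{-}(v)-d^{+}(v)\le 2-k+2b(v)$, where $b(v)$ is the number of bad pairs at $v$, against $\sum_v(d^{-}(v)-d^{+}(v))=0$ yields $|A|+|B|\ge n(k-2)/2$.

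Each bad pair produces an alternative proper coloring: a Type A pair $(v,t)$ by recoloring $v$ alone, and a Type B pair $(v_0,t_0)$ by the Kempe-swap of Lemma~\ref{lmleast} on $S(v_0,t_0)$. Type A colorings differ from $c$ at exactly one vertex, while Type B colorings differ at $|S(v_0,t_0)|\ge N(v_0,t_0)+1\ge 3$ vertices (because $v_0$ together with its $\ge 2$ $t_0$-colored neighbors lie in $S$), so the two families are disjoint and distinct Type A pairs clearly produce distinct colorings. The crux is showing that distinct Type B pairs produce distinct colorings. If $(v_0,t_0)\ne(v_1,t_1)$ gave the same coloring, then their swap sets and swap-color pairs must coincide, forcing $\{c(v_0),t_0\}=\{c(v_1),t_1\}$ and $v_1\in S(v_0,t_0)\setminus\{v_0\}$. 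Then $v_1$ has a directed $(c(v_0),t_0)$-alternating path to $v_0$ of length at least $1$, whose first edge is out-directed from $v_1$ to a neighbor $w$ whose color is the element of $\{c(v_0),t_0\}$ different from $c(v_1)$, namely $t_1$; this contradicts the Type B hypothesis at $(v_1,t_1)$, which demands that every $t_1$-colored neighbor of $v_1$ direct into $v_1$.

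Consequently the number of proper $L$-colorings is at least
\[
1+|A|+|B|\;\ge\;1+\frac{n(k-2)}{2}\;\ge\;1+\frac{n(k-4)}{4}\;=\;n\Bigl(\frac{k}{4}-1\Bigr)+1,
\]
since $(k-2)/2\ge(k-4)/4$ for all $k$. The main obstacle is the Type B uniqueness step: one must carefully exploit the directed-alternating-path structure of $S(v,t)$ from Lemma~\ref{lmleast} to rule out a second Type B originator in the same swap set. Without this multiplicity bound, the same Kempe swap could naively be counted many times and the linear-in-$n$ lower bound would collapse.
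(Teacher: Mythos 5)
Your overall strategy---converting the slack in the degree-counting argument of Theorem~\ref{main_result} into a count of recoloring opportunities---is the right one, but your case analysis has a genuine gap: you have omitted bidirectional edges, and they break your key intermediate inequality. The dichotomy ``$d^-(v,t)-d^+(v,t)\le N(v,t)-2$ unless $(v,t)$ is Type A or Type B'' is false: if $N(v,t)=1$ and the unique edge from $v$ to its $t$-colored neighbor is bidirectional, then $d^-(v,t)-d^+(v,t)=0>N(v,t)-2$, yet the pair is neither Type A ($N\ne 0$) nor Type B ($N<2$). Lemma~\ref{amin} excludes bidirectional edges only under the hypothesis that $G$ is U$k$LC, which you cannot assume here; indeed bidirectional edges are precisely configurations certifying extra colorings. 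A concrete counterexample to your inequality $|A|+|B|\ge n(k-2)/2$: take $G=K_6$ ($k=5$, $n=6$) with lists $\{1,\dots,5\}$ on $v_1,\dots,v_5$ and $\{1,2,3,4,6\}$ on $v_6$, colored $c(v_i)=i$. Every pair $(v,t)$ has $N(v,t)=1$, so $|A|=|B|=0$, while $n(k-2)/2=9$; all ten edges among $v_1,\dots,v_5$ are bidirectional, and $\sum_v\bigl(d^-(v)-d^+(v)\bigr)=0$ is certainly not $\le n(2-k)+2(|A|+|B|)=-18$. (The theorem's conclusion still holds there---that $K_6$ has $120$ colorings---but your proof does not establish it.) A smaller inaccuracy: for Type B the excess is at most $2$, not exactly $2$, since bidirectional edges can force $d^+(v,t)\ge 1$; that direction happens to be harmless.

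To repair the argument you must count bidirectional edges as a third source of new colorings (each yields a swap of its two endpoints), and that is in fact the heart of the paper's much shorter proof: the paper stops after Step~1 of the directing procedure, lets $a_i$ be the number of colors of $L(v_i)$ absent from the neighborhood of $v_i$ (your Type A pairs together with $c(v_i)$ itself), observes that at least $k-2a_i$ edges are directed out of $v_i$, and deduces that the number of bidirectional edges is at least $\sum_i(k-2a_i)-kn/2$ when this is positive. Balancing the two counts gives $1-n+\max\{\sum a_i,\ kn/2-\sum a_i\}\ge n(k/4-1)+1$ colorings; no Kempe-chain (Type B) analysis is needed at all. Your Type B machinery is not wrong in itself, but without the bidirectional count it cannot close the argument, and your distinctness argument for Type B swaps also quietly assumes there are no bidirectional edges (an out-directed edge from $v_1$ to a $t_1$-colored neighbor does not contradict $d^-(v_1,t_1)=N(v_1,t_1)$ if that edge is directed both ways).
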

\begin{proof}
If there is at least one list coloring for the graph $G$, we can run directing procedure on $G$. Hence, after Step $1$ of the directing procedure, suppose that $a_i$ denotes the number of colors that there are in $L(v_i)$ but none of the neighbors of $v_i$ got these colors. There are at least $k - 2a_i$ edges directed from $v_i$ to its neighbors. Therefore, the number of bidirectional edges of the graph is at least $max\{0,(\sum k-2a_i)-\frac{kn}{2}\}$. Because for each color in $L(v_i)$ that counted in $a_i$ and also for each bidirectional edge in the graph there is a new list coloring, the number of list colorings is at least:
\begin{equation*}
\begin{split}
&1 + \sum (a_i-1) + max\{0,\sum (k-2a_i)-\frac{kn}{2}\} \\
	&= 1 - n + max\{\sum a_i,\ \frac{kn}{2}-\sum a_i\}\\
	&\geq n(\frac{k}{4}-1)+1.
\end{split}
\end{equation*}\qed
\end{proof}
\begin{corollary}
For $k \geq 4$, in every $k$-regular graph $G$ with a list of $k$ colors on each of its vertices, then there are at least $n(\frac{k}{4}-1)+1$ list colorings and $G$ has property $M(k)$ unless $G = K_{k+1}$  with all $k$-lists identical.
\end{corollary}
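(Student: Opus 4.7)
The plan is to split the statement into two independent claims: (a) $G$ has property $M(k)$, which holds unconditionally, and (b) the number of proper list colorings is at least $n(\tfrac{k}{4}-1)+1$, where the exceptional case enters. Claim (a) is immediate from Theorem~\ref{main_result}: since $G$ is $k$-regular with $k\geq 4$, its average degree $k$ satisfies $k<2k-2$, so $G$ has property $M(k)$ regardless of the list assignment or the exception.

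For (b), the preceding theorem already supplies the bound whenever at least one proper $L$-coloring exists, so the real work is to show that existence fails only in the stated case. I would argue component-wise on $G$. For a connected component $C$ that is not $K_{k+1}$, I would invoke the list-coloring version of Brooks' theorem (Erd\H{o}s--Rubin--Taylor; Vizing): a connected graph with list chromatic number exceeding its maximum degree must be a complete graph or an odd cycle. Since $C$ is $k$-regular with $k\geq 4$, it is neither an odd cycle nor, by assumption, $K_{k+1}$, hence $\chi_\ell(C)\leq k$ and the restriction of $L$ to $C$ admits a proper coloring.

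For the remaining case of a component equal to $K_{k+1}$, a proper list coloring corresponds precisely to a system of distinct representatives for the $k+1$ lists, each of size $k$. By Hall's marriage theorem, all Hall conditions on proper subsets of size $\leq k$ hold trivially (each list has $k$ elements), so the only possibly failing condition is at the full vertex set: $\bigl|\bigcup_{v\in V(K_{k+1})}L(v)\bigr|\geq k+1$. This fails exactly when all $k$-lists coincide. Combining the two cases across all components of $G$ shows that $G$ has a proper $L$-coloring unless some $K_{k+1}$-component has all lists identical; the preceding theorem then yields the bound in (b).

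The main obstacle is pinpointing this precise failure mode, since the preceding theorem is silent when no coloring exists. Overcoming it requires the external but standard list version of Brooks' theorem together with a routine Hall-type check on $K_{k+1}$; once existence is secured, the lower bound $n(\tfrac{k}{4}-1)+1$ requires no further computation beyond the preceding theorem.
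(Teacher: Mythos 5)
Your proposal is correct. The paper states this corollary with no proof at all, so the comparison is really between your argument and what the authors must have had in mind: the count follows from the immediately preceding theorem once a proper $L$-coloring exists, and the whole content of the ``unless'' clause is to characterize when existence fails. You supply exactly that missing piece, and you do it with the right tools: the list version of Brooks' theorem (Erd\H{o}s--Rubin--Taylor/Vizing) shows every connected $k$-regular component other than $K_{k+1}$ is $k$-choosable for $k\geq 3$ (odd cycles being $2$-regular are excluded automatically), and the Hall/SDR computation on $K_{k+1}$ correctly isolates ``all lists identical'' as the unique failure mode, since $k+1$ lists of size $k$ violate Hall's condition only on the full vertex set and only when they coincide. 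Two points in your write-up are worth keeping explicit. First, you are right that the $M(k)$ assertion cannot be read off the counting bound alone when $k=4$ (the bound then equals $1$), so deriving it unconditionally from Theorem~\ref{main_result} via average degree $k<2k-2$ is not just cleaner but necessary; this also correctly detaches the exception from the $M(k)$ claim, which the corollary's phrasing muddles. Second, your component-wise analysis actually yields a slightly sharper statement than the one printed --- the exception should be ``some component of $G$ is a $K_{k+1}$ with all lists identical'' rather than ``$G=K_{k+1}$'' --- which is a correction, not a gap.
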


\begin{remark}
$\overline{C_6}$ is the smallest (i.e., fewest vertices and fewest edges) U2LC 3-regular graph since the only other 3-regular graph with 6 vertices is $K_{3,3}$.
\end{remark}

\begin{question}
Are there infinitely many 4-regular graphs with $m-number = 4$?
\end{question}

\begin{question}
Except for $K_5$ and $K_{4,4}$ does every 4-regular graph have $m-number = 3$?
\end{question}

\section{Uniquely list colorable graphs with lists of varying sizes}
\label{varylist-sec}

Until now, only U$k$LC graphs have been discussed. We can extend these results to {\sf uniquely list colorable} graphs where the lists of colors on the vertices may have different sizes. The following theorem is a generalization from Theorem~\ref{main_result}.

For the next two theorems suppose $k_i$ is the size of the list of vertex $v_i$ of degree $d_i$.

\begin{theorem}
Let $G$ be a graph with $n$ vertices and a list assignment $L$. If there exists a unique proper $L$-list coloring on $G$, then:
\begin{equation*}
\sum_{i=1}^{n} (d_i-2(k_i-1)) \geq 0.
\end{equation*}
So the average degree of $G$ is at least $\frac{\sum_{i=1}^{n} (2(k_i-1))}{n}$.
\end{theorem}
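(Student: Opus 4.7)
The plan is to lift the directed-graph argument from Theorem~\ref{main_result} essentially verbatim, replacing the uniform list size $k$ by the per-vertex size $k_i$. Let $c$ be the unique proper $L$-list coloring of $G$ and form the partially directed graph $G^{*}=D(G,c)$ via the directing procedure. For each vertex $v_i$ set $L'(v_i)=L(v_i)\setminus\{c(v_i)\}$, so $|L'(v_i)|=k_i-1$.

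First I would re-derive, for each vertex $v_i$ and each color $t\in L'(v_i)$, the bound $d^-(v_i,t)-d^+(v_i,t)\leq N(v_i,t)-2$, where $N(v_i,t)$ denotes the number of neighbors of $v_i$ with color $t$. The argument is unchanged from Theorem~\ref{main_result}: either $N(v_i,t)=1$, in which case Step~1 of the directing procedure orients that single edge out of $v_i$; or $N(v_i,t)>1$, in which case Lemma~\ref{lmleast} forces $d^-(v_i,t)\leq N(v_i,t)-1$, and when equality holds Step~2 contributes an outgoing edge so that $d^+(v_i,t)\geq 1$. Nothing in Lemmas~\ref{amin} or~\ref{lmleast} or in the directing procedure itself presupposes equal list sizes, so these tools apply unchanged.

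Next, summing the inequality over $t\in L'(v_i)$ and noting that edges from $v_i$ to a neighbor with color outside $L(v_i)$ cannot be directed out of $v_i$ (so each such edge contributes at most $+1$ to $d^-(v_i)-d^+(v_i)$), I would obtain
\begin{equation*}
d^-(v_i)-d^+(v_i)\;\leq\;\sum_{t\in L'(v_i)}(N(v_i,t)-2)\;+\;\Bigl(d_i-\sum_{t\in L'(v_i)}N(v_i,t)\Bigr)\;=\;d_i-2(k_i-1).
\end{equation*}
Summing over $i=1,\dots,n$ and invoking the fact that in any directed graph the total in-degree equals the total out-degree, so that $\sum_i\bigl(d^-(v_i)-d^+(v_i)\bigr)=0$, yields
\begin{equation*}
0\;\leq\;\sum_{i=1}^{n}\bigl(d_i-2(k_i-1)\bigr),
\end{equation*}
which is the claimed inequality; dividing by $n$ gives the stated lower bound on the average degree.

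I do not expect a real obstacle, as each step of the original proof is local to a single vertex and uses $k$ only through $|L'(v)|=k-1$. The one point worth checking carefully is the accounting for edges incident to $v_i$ whose other endpoint has a color lying outside $L(v_i)$: these may carry directions into $v_i$ but never out of $v_i$, and the bookkeeping above handles them with the slack term $d_i-\sum_{t\in L'(v_i)}N(v_i,t)$. Once this local inequality is in place, the global conclusion is immediate from in-degree/out-degree balance.
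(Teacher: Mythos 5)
Your proposal is correct and follows exactly the route the paper intends: the paper's own proof of this theorem is just the one-line remark that it is ``similar to Theorem~\ref{main_result},'' and your argument is precisely that adaptation, replacing $k$ by $k_i$ and checking that the directing procedure, Lemmas~\ref{amin} and~\ref{lmleast}, and the per-vertex bound $d^-(v_i,t)-d^+(v_i,t)\leq N(v_i,t)-2$ are all insensitive to the list sizes. Your explicit accounting for edges to neighbors whose colors lie outside $L(v_i)$ is a welcome clarification of a point the paper leaves implicit.
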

\begin{proof}
The proof is similar to Theorem~\ref{main_result}. 
\qed
\end{proof}

The next theorem suggests a test for cases when equality holds in Theorem 6. Then Theorem 7 could be used to identify some non-uniquely list colorable graphs.

\begin{theorem}
Suppose that $G$ is a uniquely list colorable graph with list assignment $L$, with $n$ vertices, and average degree $\frac{\sum_{i=1}^{n} (2(k_i-1))}{n}\cdot$ Then for each color in the union of members of $L$, say $t$, the following condition  holds:
\begin{equation*}
\sum_{v_i\in V(t)} (d_i-2(k_i-1))=2b_t-d_t.
\end{equation*}
In this inequality, $V(t)$ denotes the set of vertices with color $t$, $\widetilde{V}(t)$ is $V(G)\setminus V(t)$, and $b_t$ denotes the number of vertices in $\widetilde{V}(t)$ with at least one neighbor in $V(t)$ and containing $t$ in its list. Also, $d_t$ denotes the number of edges between $V(t)$ and $\widetilde{V}(t)$.
\end{theorem}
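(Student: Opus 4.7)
The plan is to leverage the proof of Theorem~\ref{main_result}, carried out in the varying-list-size setting of the preceding theorem. That proof sums the per-vertex bound $d^-(v_i) - d^+(v_i) \le d_i - 2(k_i-1)$ to obtain $\sum_i (d_i - 2(k_i-1)) \ge 0$; our hypothesis says this sum equals $0$, so every per-vertex inequality must be tight. Pushing tightness one level deeper, for each vertex $v$ and each $t \in L'(v)$ with $N(v,t)\ge 1$, the bound $d^-(v,t)-d^+(v,t) \le N(v,t)-2$ is also tight; moreover every neighbor $u$ of $v$ must satisfy $c(u) \in L(v)$, since otherwise the slack $d(v) - \sum_{t\in L'(v)} N(v,t) > 0$ would propagate to the global sum.

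Next I would record the extremal dichotomy at each such tight pair $(v,t)$, the only two ways the per-vertex bound can saturate as in the proof of Theorem~\ref{main_result}: either (i) exactly two of the $N(v,t)$ edges from $v$ to its color-$t$ neighbors are undirected, the remaining $N(v,t)-2$ are directed into $v$, and $d^+(v,t)=0$; or (ii) there are no undirected edges, $d^-(v,t)=N(v,t)-1$, and $d^+(v,t)=1$.

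Fix a color $t$. Since $c$ is proper, $V(t)$ is independent, so every edge incident to $V(t)$ crosses to $\widetilde V(t)$. Let $I_t, O_t, U_t$ denote, respectively, the numbers of edges of $D(G,c)$ directed from $\widetilde V(t)$ into $V(t)$, directed from $V(t)$ into $\widetilde V(t)$, and undirected between the two sides, so that $d_t = I_t + O_t + U_t$. Summing the tight per-vertex equalities over $v \in V(t)$,
\begin{equation*}
\sum_{v \in V(t)} \bigl(d_v - 2(k_v-1)\bigr) = \sum_{v\in V(t)} \bigl(d^-(v) - d^+(v)\bigr) = I_t - O_t,
\end{equation*}
so the claimed identity reduces to $U_t + 2I_t = 2b_t$.

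For this last step I would classify each $u\in \widetilde V(t)$ with $N(u,t)\ge 1$; by the tightness conclusion of the first paragraph any such $u$ automatically has $t\in L(u)$, so these are precisely the $b_t$ vertices counted in the statement. Applying the dichotomy at $(u,t)$, let $a$ and $b$ be the numbers of type~(i) and type~(ii) such vertices, so $a+b=b_t$; each type~(i) vertex contributes $2$ to $U_t$ and $0$ to $I_t$, while each type~(ii) vertex contributes $0$ to $U_t$ and $1$ to $I_t$, giving $U_t = 2a$ and $I_t = b$, hence $U_t+2I_t = 2(a+b) = 2b_t$, as required. The main obstacle will be the first step: cleanly extracting the saturation of every per-$(v,t)$ inequality, together with the forced conclusion that every neighbor color lies in the relevant list, from the single global hypothesis on the average degree. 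Once that tightness is in hand, the rest is a cut-based double count.
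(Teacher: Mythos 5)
Your strategy is genuinely different from the paper's: the paper deletes the color class $V(t)$ (and removes $t$ from the remaining lists), applies the average-degree inequality to the residual uniquely list colorable graph to get one inequality per color, and then sums over all colors in $C$ to force equality everywhere; you instead extract pointwise tightness in the directing procedure and finish with a cut count. Your reduction of the identity to $U_t+2I_t=2b_t$, and the dichotomy $(d^+(u,t),\ \mathrm{undirected})\in\{(1,0),(0,2)\}$ at each tight pair, are correct.

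However, there is a genuine flaw in your first paragraph, and it is used essentially in your last one. Tightness of the chain of inequalities in the proof of Theorem~\ref{main_result} does \emph{not} force every neighbor of $v$ to carry a color from $L(v)$. The term $d(v)-\sum_{t\in L'(v)}N(v,t)$ counts the neighbors of $v$ colored outside $L'(v)$; it enters the middle expression with a plus sign and is cancelled again when $\sum_{t}(d^-(v,t)-d^+(v,t))$ is bounded by $\sum_{t}(N(v,t)-2)$, so its positivity creates no slack in the second inequality, and the exact deficit in the first inequality is $\sum_{t\notin L'(v)}\bigl(N(v,t)-d^-(v,t)\bigr)$. What tightness actually forces is that every neighbor of $v$ colored outside $L(v)$ has its edge directed \emph{into} $v$ --- which is perfectly possible, since the directing rule for an edge $w\to v$ only requires $c(v)\in L(w)$, not $c(w)\in L(v)$. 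Consequently there may exist vertices $u\in\widetilde{V}(t)$ with a neighbor in $V(t)$ but $t\notin L(u)$; these are not counted in $b_t$ and are not covered by your dichotomy, so the identification ``$a+b=b_t$'' is unjustified as written. The proof is repairable: by the corrected tightness statement every cut edge incident to such a $u$ is directed from $V(t)$ to $u$, hence lies in $O_t$ and contributes nothing to $U_t+2I_t$, while each $u$ with $t\in L(u)$ contributes exactly $2$; this still yields $U_t+2I_t=2b_t$. You need to treat this case explicitly rather than claim it cannot occur.
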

\begin{proof}
Suppose $C= \bigcup_{v_i\in V(G)} L(v_i)$ denotes the set of all colors that exist in the union of members of $L$. For each color $t \in C$, we put aside $V(t)$ and remove $t$ from lists of vertices in $\widetilde{V}(t)$. (It is clear that there is no vertex in $\widetilde{V}(t)$ with $t$ in its list and has no neighbor in $V(t)$.) Then if we write the equation in Theorem~\ref{main_result} for the remained graph, the following equation results:
\begin{equation*}
\sum_{i=1}^{n} (d_i-2(k_i-1)) - 	\sum_{v_i\in V(t)} (d_i-2(k_i-1)) + 2b_t-d_t 
\end{equation*}
\begin{equation*}
= -\sum_{v_i\in V(t)} (d_i-2(k_i-1)) + 2b_t-d_t \geq 0.
\end{equation*}
Now for variable $d_t$ we have the following equation because each edge is counted twice: 
\begin{equation*}
	\sum_{t\in C} d_t = 2|E(G)|
\end{equation*}
and finally for $b_t$ the following equation results since vertex $v_i$ is counted $k_{i}-1$ times in $\sum b_t$:
\begin{equation*}
	\sum_{t\in C} b_t = |E(G)|
\end{equation*}

If we sum up the above equation for each color $t \in C$, the claims will be proved.
\qed
\end{proof}


\section{Further problems}
\label{further-sec}
\begin{conjecture}
All graphs with average degree $2k-2$ have property $M(k)$ (changing ``less than'' to ``equal'' in Theorem~\ref{main_result}).
\end{conjecture}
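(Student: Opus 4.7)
Suppose for contradiction that $G$ is U$k$LC with average degree exactly $2k-2$, and fix a unique proper $L$-list coloring $c$. Running the directing procedure on $(G,c)$ and mimicking the proof of Theorem~\ref{main_result}, one obtains
\[
0 \;=\; \sum_v\bigl(d^-(v)-d^+(v)\bigr) \;\le\; 2|E(G)|-n(2k-2) \;=\; 0,
\]
so every intermediate inequality is forced to be an equality at every vertex. Extracting this tightness at each $v$ yields two structural conditions: (i) $d^-(v,t)=N(v,t)$ whenever a color $t$ appears on some neighbor of $v$ but is not in $L(v)$, so every out-of-list neighbor sends its edge into $v$; and (ii) for every $t\in L'(v)$, $d^-(v,t)-d^+(v,t)=N(v,t)-2$, so one of the three sub-cases in the proof of Theorem~\ref{main_result} is realized exactly at $v$. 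In particular the undirected degree $u(v)$ is even, and the undirected edges incident to $v$ come in pairs whose two far endpoints share a common color.

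Following these pairings vertex by vertex decomposes the undirected subgraph into closed walks that alternate between two colors. The plan is to pick such a walk $C$ with color set $\{a,b\}$, form the $(a,b)$-Kempe component $K$ of $G$ containing $V(C)$, and show that swapping $a\leftrightarrow b$ throughout $K$ produces a second proper $L$-list coloring, contradicting U$k$LC. The swap is proper as soon as every $u\in K$ satisfies $\{a,b\}\subseteq L(u)$, because every $(a,b)$-colored neighbor of $K$ is itself in $K$ by the Kempe definition, and neighbors with other colors are unaffected by the swap. The base case is immediate on $C$: if some $v\in V(C)$ had $b\notin L(v)$, condition (i) at $v$ would force the adjacent $C$-vertex $u$ of color $b$ to send a directed edge $u\to v$, contradicting that the edge $(u,v)$ on $C$ is undirected.

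The inductive step---propagating list-containment along Kempe-alternating paths---is the main obstacle. Given $w\in K$ with $\{a,b\}\subseteq L(w)$ and a neighbor $u\in K$ with $c(u)\neq c(w)$, one must rule out $c(w)\notin L(u)$. The natural approach is to use condition~(i) at $u$ to force $w\to u$, and then condition~(ii) at $w$ for the color $c(u)\in L'(w)$, together with Lemma~\ref{amin}, to also force $u\to w$ and produce a forbidden bidirectional edge. This closes cleanly in the sub-case of (ii) in which $w$ has two undirected color-$c(u)$ edges (the ``2a'' case), but the sub-cases $N(w,c(u))=1$ and the ``2b'' case do not immediately yield the back-direction. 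To handle these residues, I would restrict $K$ to the sub-component $K^\star$ reachable from $C$ through edges with mutual list-containment, and argue via Lemma~\ref{amin} together with condition~(i) that no edge from $K^\star$ to $V(G)\setminus K^\star$ terminates at an $(a,b)$-colored vertex, so that the swap on $K^\star$ alone still produces a proper coloring. The bulk of the remaining work is to verify this closure property carefully, or, as a backup, to extract a strict inequality at some vertex with a bad out-of-list neighbor, directly contradicting the global equality $\sum_v(d^-(v)-d^+(v))=0$.
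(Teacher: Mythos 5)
You should know first that the paper contains no proof of this statement: it appears in Section~\ref{further-sec} as an open conjecture, so there is no argument of record to compare against, and a complete proof here would be a new result rather than a reconstruction. That said, your tightness analysis is sound as far as it goes. When the average degree equals $2k-2$, the chain of inequalities in the proof of Theorem~\ref{main_result} does collapse to equalities at every vertex; your conditions (i) and (ii) are the correct extraction; writing $u(v,t)$ for the number of undirected edges from $v$ to $t$-colored neighbors, equality in (ii) gives $2d^+(v,t)+u(v,t)=2$, so each color contributes either one out-edge and no undirected edge, or two undirected edges; together with (i) this yields the even undirected degree, the pairing by far-endpoint color, and the decomposition into closed two-colored alternating walks. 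The base case of the Kempe argument on the walk $C$ is also correct.

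The gap is the one you name yourself, and it is genuine rather than a routine verification. Take $w\in K^\star$ with $c(w)=a$ and $\{a,b\}\subseteq L(w)$, and suppose $w$ realizes condition (ii) for the color $b$ in the sub-case $d^+(w,b)=1$, $u(w,b)=0$ (which subsumes $N(w,b)=1$). If the unique out-neighbor $u$ of $w$ in color $b$ has $a\notin L(u)$, then condition (i) at $u$ merely reconfirms the direction $w\to u$ that you already have; no bidirected edge arises, Lemma~\ref{amin} is not violated, and nothing forces $a\in L(u)$. Your fallback of swapping only on $K^\star$ fails on exactly this configuration: after the swap $w$ receives color $b$ and conflicts with its neighbor $u$, which keeps color $b$ because it lies outside $K^\star$, so the claimed closure property (``no edge from $K^\star$ leaves to an $(a,b)$-colored vertex'') is precisely what is unproven. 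The alternative backup of locating a strict inequality is likewise not carried out. As written, the argument establishes the conclusion only when every relevant vertex falls into the two-undirected-edges sub-case of (ii), which is far from the general situation; the conjecture remains open.
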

\begin{acknowledgements}
Part of the research of E. S. M. was supported by INSF and 
the Research Office of the Sharif University of Technology.
\end{acknowledgements}

\end{document}